\newtheorem{theorem}{Theorem}[section]
\newtheorem*{theorem_main}{Main Theorem}
\newtheorem{lemma}[theorem]{Lemma}
\newtheorem{corollary}[theorem]{Corollary}
\theoremstyle{definition}
\theoremstyle{remark}
\newtheorem{remark}{Remark}
\renewcommand{\leq}{\leqslant}
\renewcommand{\geq}{\geqslant}
\newcommand{\remove}[1]{}
\newcommand{\R}{{\mathbb{R}}}
\newcommand{\Set}{{\sf Set}}
\newcommand{\uVL}{{\sf uVL}}
\newcommand{\wVL}{{\sf wVL}}
\newcommand{\uHA}{{\sf uHA}}
\newcommand{\BA}{{\sf BA}}
\DeclareMathOperator{\Spec}{\bf Spec}
\DeclareMathOperator{\Max}{\bf MaxSpec}
\DeclareMathOperator{\VV}{\mathbb{V}} 
\DeclareMathOperator{\C}{\mathrm{C}} 
\DeclareMathOperator{\LC}{\mathrm{LC}} 
\DeclareMathOperator{\K}{\mathrm{K}} 
\newcommand{\B}{\mathscr{B}}    
\renewcommand{\H}{\mathscr{H}}  
\newcommand{\U}{\mathscr{U}}  
\newcommand{\F}{\mathscr{F}}  
\newcommand{\St}{\mathscr{S}} 
\title[Unital hyperarchimedean  vector lattices]{Unital hyperarchimedean vector lattices}
\author{Richard N.\ Ball and Vincenzo Marra}
\date{14 September 2013}
\address[Ball]{
Department of Mathematics\\
University of Denver\\
Denver, CO 80210\\
U.S.A.}
\email{rball@du.edu}
\address[Marra]{Dipartimento di Matematica ``{\it Federigo Enriques}'' \\
Universit\`{a} degli Studi di Milano \\
via Cesare Saldini 50 \\ I-20133 Milano \\
Italy}
\email{vincenzo.marra@unimi.it}
\thanks{2010 {\it Mathematics Subject Classification.}
Primary: 06F20. Secondary:  
	03G05, 06E15, 46E25.}
\keywords{Vector lattice, weak order unit, strong order unit, Archimedean property, hyperarchimedean property, lattice-ordered group, Yosida representation, prime spectrum, Boolean algebra, Boolean space, Stone space, Cantor space, locally constant function, ring of continuous functions.}
\begin{document}
%


  \begin{abstract} We prove that the category of unital hyperarchimedean vector lattices 
  is equivalent to the category of Boolean algebras. The key result needed to establish the equivalence is that, via the Yosida representation, such a vector lattice  is naturally isomorphic to the vector lattice of all locally constant real-valued continuous functions on a Boolean (=compact Hausdorff totally disconnected) space.  We give two applications of our main result. \end{abstract}

\maketitle

\section{Introduction}\label{s:intro}
We assume familiarity with lattice-ordered real vector spaces, known as \emph{vector lattices} or as \emph{Riesz spaces}; see  \cite{birkhoff, luxemburgzaanen}. For background  on lattice-ordered Abelian groups, see \cite{bkw, dar}.

Hyperarchimedean lattice-ordered groups and vector spaces have been intensely investigated in the last forty years. Conrad's paper \cite{conrad} includes many references to the basic results in the theory. It  has long been known (see \cite[14.1.4]{bkw}) that any lattice-ordered group of real-valued  functions, each of which has finite range,  is hyperarchimedean, but that the converse fails. Bernau showed by an explicit example \cite[Example on p.\ 41]{bernau} that the converse fails  for vector lattices, too. Conrad, however, proved that if the vector lattice has a weak order unit then  the converse does hold \cite[Corollary I]{conrad}. In this paper we prove that Conrad's representation theorem can be made functorial, by using the maximal spectral space of the vector lattice as the domain of the representation. Moreover, the functor so constructed induces a natural equivalence with Boolean algebras; this constitutes our main result, which we now state precisely.

Let $\BA$ denote the category of Boolean algebras with Boolean homomorphisms. Further, let 
$\wVL$ denote the category of unital vector lattices, i.e.,  vector lattices endowed with a distinguished \emph{weak} (\emph{order}) \emph{unit}: a member $u$ of $V^+:=\{v \in V \mid v \geq 0\}$ such that $u \wedge v = 0$ implies $v = 0$, for all $v \in V$. The $\wVL$-morphisms are the the linear lattice homomorphisms that preserve the distinguished units. Explicitly, if $(V,u)$ and $(V',u')$ are
two  objects in $\wVL$, a  morphism is a function $f \colon V \to V'$ that is a homomorphism of vector spaces, a homomorphism of lattices, and satisfies $f(u)=u'$.

A  vector lattice  $V$ is \emph{hyperarchimedean} if all of its homomorphic images are Ar\-chi\-me\-dean, the latter meaning that, for all $v,w \in V$, $0<v\leq w$ implies $nv \not \leq w$ for some positive integer $n$. It is known (see e.g.\ \cite[Thm 55.1.d]{dar}) that in any such vector lattice, a weak  unit $u$ is actually a \emph{strong} (\emph{order}) \emph{unit}, meaning that for all $v \in V^+$ there is a positive integer $n$ for which $v \leq nu$. We write $\uHA$ for the full subcategory of $\wVL$ whose objects are hyperarchimedean vector lattices. 
 
 Our main result is that $\uHA$ and $\BA$ are equivalent categories. This is the content of Section \ref{s:main}; all that goes before leads up to it, and all that comes after applies it.  As an almost immediate corollary, we show in Subsection \ref{ss:a-closed} that unital hyperarchimedean vector lattices are $a$-closed, thus reobtaining Conrad's  \cite[Theorem 5.1]{conrad}. In Subsection \ref{ss:free} we characterize the free objects in $\uHA$ with a countable generating set. This provides a vector-lattice analogue of the classical result of Tarski that the free Boolean algebra on countably many generators is the essentially unique countable atomless Boolean algebra.  
\section{Preliminaries}\label{s:pre}
 The kernels of homomorphisms of a unital vector lattice are precisely the order-convex sublattice subgroups of $K$ with the feature that  $a \wedge u \in K$ implies $a \in K$ for all $a \geq 0$. (This feature is automatic if the unit happens to be strong, as in the hyperarchimedean situation.) These kernels are often called \emph{unital $\ell$-ideals} in the literature, but we simplify this terminology because it leads to no confusion in this paper, and refer to them as \emph{ideals}. An ideal $\mathfrak{p}$ of a unital vector lattice $V$ is \emph{prime} if the quotient vector lattice $V/\mathfrak{p}$ is totally ordered. An ideal $\mathfrak{m}$ of a unital vector lattice $V$ is \emph{maximal} if the quotient vector lattice $V/\mathfrak{m}$ is totally ordered and Archimedean; equivalently, if $\mathfrak{m}$ is inclusion-maximal amongst ideals. As usual, an ideal is \emph{principal} if it is singly generated;  the principal ideal generated by $v\in V$ coincides with the set $\{w \in V \mid |w| \leq n|v| \text{ for some integer } n\geq 1\}$, where $|v|:=(v \vee 0)\vee -(v \wedge 0)$.  
\subsection{Spectra}\label{s:spectra}
For any unital vector lattice  $(V,u)$, we denote by $\Spec{V}$ the collection of prime ideals of $V$, and by $\Max{V}$ the subset of maximal ideals. 
We  topologize $\Spec{V}$ using the \emph{spectral}, or \emph{Zariski},  topology. A basis of closed sets for this topology is given by the subsets of the form
\begin{align}\label{eq:zariski}
\VV{(P)}=\{\mathfrak{p}\in \Spec{V} \,\mid\, \mathfrak{p}\supseteq P\} ,
\end{align}
as $P$ ranges over finitely generated (equivalently, principal) ideals of $V$. The resulting topological space $\Spec{V}$ is called the \emph{prime spectrum} of $V$; 
its subspace $\Max{V}$ is called the \emph{maximal spectrum} of $V$. We refer to this topology on $\Max{V}$ as the spectral topology, too. It agrees with the classical  \emph{hull-kernel} topology on rings of continuous functions \cite{gj}, \textit{mutatis mutandis}.

We shall henceforth restrict attention to vector lattices equipped with a distinguished \emph{strong}  unit. Albeit in the hyperarchimedean case weak and strong units coincide, as mentioned, in the general case a weak unit need not be strong. We write $\uVL$ for the full subcategory of $\wVL$ consisting of unital vector lattices whose distinguished weak unit is strong.

A topological space $X$ is \emph{spectral} if it is ${\rm T}_{0}$, compact, its compact open subsets form a basis for the topology that is closed under finite intersections, and $X$ is \emph{sober}, i.e.\ every non-empty irreducible closed subset of $X$ has a dense point. We recall that a closed subset of a topological space is \emph{irreducible} if it may not be  written as the union of two proper non-empty closed subsets.

\begin{lemma}\label{l:spectop}For any  vector lattice $(V,u)$ in $\uVL$, $\Spec{V}$ is a spectral space and $\Max{V}$ is a compact Hausdorff space.
\end{lemma}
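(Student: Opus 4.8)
The plan is to establish that $\Spec V$ is a spectral space by verifying the four defining conditions ($\mathrm{T}_0$, compactness, a basis of compact open sets closed under finite intersection, and sobriety), and then to derive the Hausdorffness and compactness of $\Max V$ from the structure of the maximal spectrum. The natural tool is the Yosida-style correspondence between prime ideals and the algebraic structure of $V$, together with the definition of the Zariski topology via the closed sets $\VV(P)$.

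First I would unwind the topology. Since a basis of closed sets is given by the $\VV(P)$ for $P$ principal, the basic \emph{open} sets are the complements $\Spec V \setminus \VV(P)$; writing $P=\langle v\rangle$ for the principal ideal generated by $v \in V^+$, one checks that $\Spec V \setminus \VV(\langle v\rangle)=\{\mathfrak p \mid v \notin \mathfrak p\}$, the ``support'' of $v$. The key computation is that these supports are closed under finite intersection, which follows from the lattice identity showing that $\langle v\rangle \vee \langle w\rangle = \langle |v|\vee|w|\rangle$ and $\langle v \rangle \cap \langle w\rangle = \langle |v| \wedge |w|\rangle$ at the level of principal ideals, so that $v,w \notin \mathfrak p$ iff $|v|\wedge|w| \notin \mathfrak p$ (using primeness of $\mathfrak p$). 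To verify $\mathrm{T}_0$, note that two distinct primes differ by some element lying in one but not the other, separating them by a basic support. For compactness of $\Spec V$ as a whole, I would use that the strong unit $u$ satisfies $u \notin \mathfrak p$ for every prime $\mathfrak p$ (since $\mathfrak p$ is proper and $u$ is a strong unit, so $\mathfrak p = V$ would follow from $u \in \mathfrak p$), whence $\Spec V$ is the support of $u$; a standard argument via Zorn's lemma or the finite intersection property then shows any cover by basic supports has a finite subcover.

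The step I expect to be the main obstacle is \textbf{sobriety}: showing every non-empty irreducible closed set has a generic point. For this I would take an irreducible closed set $C$, consider the ideal $\mathfrak q := \bigcap\{\mathfrak p \mid \mathfrak p \in C\}$, and argue that irreducibility of $C$ forces $\mathfrak q$ to itself be a \emph{prime} ideal, so that $\mathfrak q$ is the dense point of $C$. The crux is translating the topological irreducibility of $C$ into the algebraic primeness of $\mathfrak q$: if $|v|\wedge|w| \in \mathfrak q$ but neither $v$ nor $w$ lies in $\mathfrak q$, then $C$ splits as the union of its intersections with $\VV(\langle v\rangle)$ and $\VV(\langle w\rangle)$, both proper, contradicting irreducibility. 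Closure of $C$ guarantees that $\mathfrak q \in C$, making it the generic point. This identification of primes with irreducible closeds is the heart of the matter and is where the order-theoretic content is genuinely used.

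Finally, for $\Max V$ I would argue compactness and Hausdorffness separately. Compactness of $\Max V$ follows because $\Max V$ is a closed (equivalently, in the spectral setting, a retract via a continuous closure-type map) subset of the compact space $\Spec V$, or more directly because every maximal ideal contains a unique minimal prime below which one can run the same finite-intersection argument. Hausdorffness is the characteristically \emph{strong-unit} phenomenon: given distinct maximal ideals $\mathfrak m_1 \neq \mathfrak m_2$, I would produce $v,w \in V^+$ with $v \wedge w = 0$, $v \notin \mathfrak m_1$, and $w \notin \mathfrak m_2$, which separate the two points by disjoint basic supports. The existence of such a separating pair rests on the fact that the quotients $V/\mathfrak m_i$ are Archimedean and totally ordered, hence embed in $\R$, allowing one to realize the separation as a genuine disjointness in $V$; this is precisely where maximality (as opposed to mere primeness) and the strong-unit hypothesis combine to upgrade the $\mathrm{T}_0$ spectral topology to a Hausdorff one.
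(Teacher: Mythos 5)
The paper does not actually prove this lemma from scratch: its ``proof'' is a citation to \cite[10.1.4--10.1.6 and 10.2.5]{bkw}, plus the remark that sobriety is a routine check. Your proposal therefore does more work than the paper, and most of it is sound and standard: the identification of the basic opens with supports $\{\mathfrak{p}\mid v\notin\mathfrak{p}\}$, their closure under finite intersections via primeness, the ${\rm T}_{0}$ property, compactness of $\Spec V$ from the strong unit (which is exactly where the hypothesis $(V,u)\in\uVL$ enters, as the paper's own remark stresses), the sobriety argument showing that the intersection of an irreducible closed set is a prime ideal and is its generic point, and the Hausdorff separation of distinct maximal ideals by disjoint positive elements (concretely $(n|a|-u)^{+}$ and $(u-n|a|)^{+}$ for suitable $a\in\mathfrak{m}_{2}\setminus\mathfrak{m}_{1}$ and $n$, using that maximality forces $u\leq m+n|a|$ for some $m\in\mathfrak{m}_{1}^{+}$). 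One omission: the definition of a spectral space requires the compact opens to form a basis, so you must check that each individual support is compact, not just the whole space; the same ideal-generation argument, applied to the support of $v$ rather than of $u$, does this.

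The genuine error is in your treatment of the compactness of $\Max V$. The set $\Max V$ is in general \emph{not} closed in $\Spec V$: its closure is the set of primes containing $\bigcap\Max V$, and when $V$ is Archimedean this intersection is $\{0\}$, so $\Max V$ is \emph{dense} in $\Spec V$ and is closed only in the degenerate case $\Spec V=\Max V$. For instance, for $V=\C([0,1])$ the maximal spectrum is a proper dense subspace of the prime spectrum. The companion claim that ``every maximal ideal contains a unique minimal prime'' is also false, and has the true statement backwards: in an $\ell$-group the primes \emph{containing} a fixed prime form a chain, so every prime lies below a unique maximal ideal. The correct repairs are either (a) to rerun the finite-intersection-property argument directly on $\Max V$ --- a family of basic closed sets with the finite intersection property relative to $\Max V$ generates an ideal omitting $u$, hence proper, hence contained in a maximal ideal by Zorn's Lemma, since properness is equivalent to omitting the strong unit --- or (b) to use the root-system property just mentioned to define the retraction $\Spec V\to\Max V$ sending each prime to the unique maximal ideal above it, and to verify its continuity. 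Either repair is routine, but the step as you wrote it fails.
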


\begin{proof}This is proved in \cite[10.1.4--10.1.6 and 10.2.5]{bkw} in the more general setting of  lattice-ordered Abelian groups. Although sobriety is not explicity proved, it follows by a straightforward verification.
\end{proof}
\begin{remark}We emphasize that, in the preceding lemma,  $\Spec{V}$ and $\Max{V}$ are compact precisely because of the assumption that $u$ be a strong unit of $V$. Indeed, the compact and open subsets of $\Spec{V}$ are in order-preserving bijection with the principal  ideals of $V$ \cite[10.1.3--10.1.4]{bkw}, so that $\Spec{V}$ is compact if, and only if, $V$ itself is a principal ideal if, and only if, $V$ has a strong unit.
\end{remark}

It is well known that $\Spec$ is, in fact, a functor. Let $h \colon (V,u) \to (V',u')$ be an arrow in $\uVL$. For each $\mathfrak{p}\in \Spec{V'}$, set $h^{*}(\mathfrak{p})=h^{-1}(\mathfrak{p})$. It is an exercise to check that $h^{*}(\mathfrak{p})$ is a prime ideal of $V$. Hence we obtain a function
\begin{align}\label{e:spectralmap}
h^{*}\colon \Spec{V'}\to \Spec{V}
\end{align}
called \emph{the dual  map induced by $h$}.

We record the important features of the dual map (\ref{e:spectralmap}) in Lemma \ref{l:spectralmap}, for which we recall a basic technique. We say that an ideal $J$ \emph{separates} an ideal $I$ from a filter $F$ on  $V^{+}$ if 
$I \subseteq J$ and $J \cap F = \emptyset$.

\begin{lemma}[Prime Building Lemma] In a vector lattice, every ideal can be separated from every filter disjoint from it by a prime ideal. 
\end{lemma}

\begin{proof}
For a particular ideal $I$ and filter $F \subseteq V^{+}$, $I \cap F = \emptyset$, every separating ideal is contained in a maximal separating ideal by Zorn\rq{}s Lemma.  It is then light work to show that a maximal separating ideal is prime.  
\end{proof}

\begin{lemma}\label{l:spectralmap} Let $h \colon (V,u) \to (V',u')$ be any arrow in $\uVL$, and let $h^{*}\colon \Spec{V'}\to \Spec{V}$ be the 
 dual  map induced by $h$ as in \textup{(\ref{e:spectralmap})}.
 
 \begin{enumerate}
 \item[(a)] $h^{*}$ is continuous.
 \item[(b)] $h^{*}$ restricts to a \textup{(}continuous\textup{)} map $\Max{V'}\to\Max{V}$.
 \item[(c)] $h^{*}$ is onto if, and only if, $h$ is one-one. And in this case the restriction of $h^{*}$ maps 
$\Max{V'}$ onto $\Max{V}$.
 \item[(d)] $h^{*}$ is one-one if $h$ is onto.
 \end{enumerate}
\end{lemma}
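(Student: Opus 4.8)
The plan is to handle the four parts in turn, noting that (a) and (d) are essentially formal, that (b) is a short quotient argument, and that the real weight of the lemma lies in (c). For (a), I would check that preimages of the basic closed sets $\VV(P)$ are closed. Fix a principal ideal $P$ generated by $v\in V$. Since $h$ is a lattice homomorphism preserving absolute values, a prime $\mathfrak{q}$ of $V'$ satisfies $h^{-1}(\mathfrak{q})\supseteq P$ if and only if $h(P)\subseteq\mathfrak{q}$, equivalently $\mathfrak{q}$ contains the principal ideal $\langle h(v)\rangle$. Hence $(h^{*})^{-1}(\VV(P))=\VV(\langle h(v)\rangle)$, which is closed, so $h^{*}$ is continuous.

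For (b), given $\mathfrak{m}\in\Max{V'}$, I would consider the composite $g\colon V\xrightarrow{h}V'\to V'/\mathfrak{m}$, whose kernel is exactly $h^{-1}(\mathfrak{m})$. By the homomorphism theorem for vector lattices, $V/h^{-1}(\mathfrak{m})$ is isomorphic to a sub-vector-lattice of $V'/\mathfrak{m}$. Both total order and the Archimedean property are inherited by substructures, so $V/h^{-1}(\mathfrak{m})$ is totally ordered and Archimedean; it is nonzero because $h(u)=u'$ is a strong unit and so avoids the proper ideal $\mathfrak{m}$. Therefore $h^{-1}(\mathfrak{m})$ is maximal, and continuity of the restriction is inherited from (a).

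Part (c) is the crux, and I would use the Prime Building Lemma. For $h$ one-one $\Rightarrow h^{*}$ onto, fix a prime $\mathfrak{p}$ of $V$ and set $F:=V^{+}\setminus\mathfrak{p}$, which is a filter precisely because $\mathfrak{p}$ is prime; let $F'$ be the upward closure of $h(F)$ in $(V')^{+}$, also a filter, and let $I$ be the ideal of $V'$ generated by $h(\mathfrak{p})$. The key claim is $I\cap F'=\emptyset$: an element of the intersection would yield $a\in F$ and $p\in\mathfrak{p}^{+}$ with $0\leq h(a)\leq h(p)$, whence $h(a)=h(a\wedge p)$, and injectivity of $h$ forces $a\leq p$, so $a\in\mathfrak{p}$, a contradiction. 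A prime $\mathfrak{q}$ separating $I$ from $F'$ then satisfies $\mathfrak{p}\subseteq h^{-1}(\mathfrak{q})$ and $(V^{+}\setminus\mathfrak{p})\cap h^{-1}(\mathfrak{q})=\emptyset$, which together give $h^{-1}(\mathfrak{q})=\mathfrak{p}$. Conversely, if $h$ is not one-one, I would pick $v>0$ with $h(v)=0$ and, again by Prime Building, a prime $\mathfrak{p}$ with $v\notin\mathfrak{p}$; no $\mathfrak{q}$ can contract to $\mathfrak{p}$ since $h(v)=0$ lies in every prime, so $h^{*}$ misses $\mathfrak{p}$. For the maximal-spectrum statement, assuming $h$ one-one and given $\mathfrak{m}\in\Max{V}$, I would take the prime $\mathfrak{q}$ just produced with $h^{-1}(\mathfrak{q})=\mathfrak{m}$, enlarge it to a maximal ideal $\mathfrak{n}\supseteq\mathfrak{q}$ of $V'$, and observe that $h^{-1}(\mathfrak{n})$ is a proper ideal (as $u'\notin\mathfrak{n}$) containing the maximal ideal $\mathfrak{m}$, hence equal to it; so $\mathfrak{n}\mapsto\mathfrak{m}$.

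Finally, (d) is immediate: if $h$ is onto and $h^{-1}(\mathfrak{q}_{1})=h^{-1}(\mathfrak{q}_{2})$ for primes of $V'$, then each $w\in V'$ equals $h(v)$ for some $v$, and $w\in\mathfrak{q}_{i}\iff v\in h^{-1}(\mathfrak{q}_{i})$ forces $\mathfrak{q}_{1}=\mathfrak{q}_{2}$. The step I expect to be the main obstacle is the onto direction of (c): packaging $\mathfrak{p}$ into a filter/ideal pair to which the Prime Building Lemma applies, and isolating the precise point where injectivity of $h$ enters, namely the disjointness $I\cap F'=\emptyset$.
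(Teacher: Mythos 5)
Your proof is correct and follows essentially the same route as the paper: part (a) via preimages of the basic closed sets, part (d) directly, and the onto direction of (c) by applying the Prime Building Lemma to the ideal generated by $h(\mathfrak{p})$ and the filter generated by $h(V^{+}\smallsetminus\mathfrak{p})$, with injectivity entering exactly where you isolate it. The only minor divergence is in (b), where the paper invokes the characterisation of maximal ideals as those maximal with respect to omitting the unit while you embed $V/h^{-1}(\mathfrak{m})$ into $V'/\mathfrak{m}$ and use heredity of total order and the Archimedean property; both are fine, and your handling of the $\Max$-surjectivity claim in (c) and of the ``not one-one'' direction is in fact more explicit than the paper's.
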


\begin{proof} (a) This follows from the fact that 
\[
h^{*-1}(\VV {(a)}) = \VV{(h(a))}, \qquad a \geq 0.
\]
Here,  $\VV {(a)}$ abbreviates the basic closed set $\VV{(\langle a\rangle)}$ as in (\ref{eq:zariski}), and $\langle a\rangle$ denotes the principal ideal of $V$ generated by $a$.

(b) Observe that an ideal is proper if and only if it omits the unit, and is maximal if and only if it is maximal with respect to doing so. It follows that $h^{-1}(\mathfrak{m})$ is a maximal ideal of $V$ if $\mathfrak{m}$ is a maximal ideal of $V'$. 

(c) If $h$ fails to be one-one, say $h(a) = h(b)$ for $a \neq b$ in $V$, then no prime of $V$ which separates these elements, i.e., contains one but not the other, can possibly lie in the range of $h^{*}$. So suppose $h$ is one-one and consider $\mathfrak{p} \in \Spec{V}$. Then $h(\mathfrak{p})$ is a sublattice subgroup of $V'$ disjoint from $ F = h(V \smallsetminus \mathfrak{p})$.   Now the convexification  of  $h(\mathfrak{p})$ in $V'$ is
\[
I = \{a \in V' \,\mid\, \exists \ b\in \mathfrak{p}^{+}\ (|a| \leq h(b))\},
\]
so that the ideal it generates is $J = \{a \,\mid\, |a| \wedge u = 0 \}$. This makes it clear that, by virtue of the fact that $h$ is one-one, $J$ is disjoint from the filter on $V'^{+}$ generated by $F\cap V^{+}$.  The Prime Building Lemma then produces a prime $\mathfrak{q} \in \Spec {V'}$ containing $J$ and disjoint from $F$. Clearly $h^{*}(\mathfrak{q}) = \mathfrak{p}$.

 (d) Let $\mathfrak{p}$ and $\mathfrak{q}$ be distinct members of $\Spec V'$, say 
$v' \in \mathfrak{p}\setminus \mathfrak{q}$. Let $v \in V$ be such that $h(v) = v'$.  Then $v \in h^{*}(\mathfrak{p}) \setminus h^{*}(\mathfrak{q})$. 
\end{proof}

\subsection{Hyperarchimedean vector lattices}\label{ss:ha}
Several characterisations of unital hyperarchimedean vector lattices  are available in the literature. We shall use two. Let us first recall that a \emph{Boolean element} of a unital vector lattice $(V,u)$ is an element  $v\in V$ such that there exist $v' \in V$ with $v\wedge v'=0$ and $v\vee v'=u$. Equivalently, $0 \leq v \leq u$ and $v \wedge (u-v) = 0$. 
\begin{lemma}\label{l:ha_char} For any  vector lattice $(V,u)$ in $\uVL$,  the following are equivalent. 
\begin{enumerate}
\item[(i)] $V$ is hyperarchimedean.
\item[(ii)] For any $v \in V$ there exists an integer $n\geq 1$ such that $w=n|v|\wedge u$ is a Boolean element.
\item[(iii)] Each prime ideal of $V$ is maximal, i.e.\ $\Spec{V}=\Max{V}$.
\end{enumerate}
\end{lemma}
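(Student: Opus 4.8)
The plan is to establish the two equivalences (i) $\Leftrightarrow$ (iii) and (ii) $\Leftrightarrow$ (iii), taking (iii) as the hub. Throughout I would lean on two facts available for any $(V,u)$ in $\uVL$. First, the intersection of all prime ideals is $\{0\}$: applying the Prime Building Lemma to separate $\{0\}$ from the filter generated by $|v|$ whenever $v\neq 0$ produces a prime missing $v$. Consequently an identity $x=0$, and hence any inequality $a\leq b$ rewritten as $(a-b)^{+}=0$, holds in $V$ if and only if it holds in every quotient $V/\mathfrak{p}$. Second, the image of $u$ is again a strong unit in every quotient. Together these show that an element $w$ with $0\leq w\leq u$ is Boolean if and only if $\bar w\wedge(u-\bar w)=0$ in $V/\mathfrak{p}$ for every prime $\mathfrak{p}$, and that in a totally ordered quotient the only Boolean elements are $0$ and $u$.

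For (i) $\Rightarrow$ (iii), let $\mathfrak{p}$ be prime. Then $V/\mathfrak{p}$ is totally ordered by definition of prime, and Archimedean since it is a homomorphic image of the hyperarchimedean $V$; by the very definition of a maximal ideal, $\mathfrak{p}$ is maximal. For the converse (iii) $\Rightarrow$ (i) I would first note that every ideal $I$ equals the intersection of the prime ideals containing it, again by the Prime Building Lemma, separating $I$ from the filter generated by $|a|$ for each $a\notin I$. Under (iii) those primes are maximal, so each $V/\mathfrak{m}$ is Archimedean and $V/I$ embeds as a sub-vector-lattice of the product $\prod V/\mathfrak{m}$. Since a product of Archimedean vector lattices is Archimedean and a sub-vector-lattice of an Archimedean one is Archimedean, every quotient $V/I$ is Archimedean, which is exactly (i).

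The implication (ii) $\Rightarrow$ (iii) is then short. Given a prime $\mathfrak{p}$, I would show $V/\mathfrak{p}$ is Archimedean, so that, being also totally ordered, $\mathfrak{p}$ is maximal. Fix $\bar v\in V/\mathfrak{p}$ and use (ii) to pick $n$ with $n|v|\wedge u$ Boolean; its image $n|\bar v|\wedge u$ is a Boolean element of the totally ordered $V/\mathfrak{p}$, hence is $0$ or $u$. In the first case the strong unit property forces $\bar v=0$; in the second $n|\bar v|\geq u$. Thus every nonzero element of $V/\mathfrak{p}$ dominates a multiple of $u$, and since $u$ is a strong unit this rules out infinitely small elements, giving the Archimedean property.

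The real work is in (iii) $\Rightarrow$ (ii), which I expect to be the main obstacle. After reducing to $v\geq 0$, the goal is to produce $N$ with $\VV((Nv-u)^{+})\cap\Max V=\VV(v)\cap\Max V$; a primewise check then shows $Nv\wedge u$ is Boolean. Writing $X=\Max V$, which equals $\Spec V$ under (iii) and is compact Hausdorff by Lemma \ref{l:spectop}, the decreasing closed sets $C_{n}=\VV((nv-u)^{+})\cap X$ have intersection $Z=\VV(v)\cap X$, and their complements form an increasing open cover of $X\setminus Z$; a finite subcover then yields the desired $N$, provided $X\setminus Z$ is compact, i.e.\ provided $Z$ is clopen. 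Establishing that $Z$ is open is the crux. Here I would use that under (iii) every prime is also a minimal prime (below every prime lies a minimal prime, and inclusion-maximality forces the two to coincide), together with the standard disjointness criterion for minimal primes. Concretely, for $\mathfrak{m}\in Z$ minimality yields $b\notin\mathfrak{m}$ with $v\wedge|b|=0$; since $v\wedge|b|=0$ lies in every prime, primeness shows that the basic open set $\{\mathfrak{p}\mid b\notin\mathfrak{p}\}$ is a neighbourhood of $\mathfrak{m}$ contained in $Z$. Hence $Z$ is open, the compactness argument applies, and (ii) follows.
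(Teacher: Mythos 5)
Your proposal is correct, but note that the paper does not actually prove this lemma: it simply cites \cite[Theorem 55.1]{dar} and \cite[6.3.2]{cdm}, so you have supplied a self-contained argument where the authors defer to the literature. Your reductions all check out: the subdirect-representation argument for (iii) $\Rightarrow$ (i), the quotient argument for (i) $\Rightarrow$ (iii) (which, given the paper's definition of ``maximal'' as ``totally ordered and Archimedean quotient,'' is essentially immediate), and the primewise analysis for (ii) $\Rightarrow$ (iii) are all sound, resting on the correct observation that $\bigcap\Spec{V}=\{0\}$ so that identities can be verified in the totally ordered quotients. For the crux (iii) $\Rightarrow$ (ii), your compactness argument with the decreasing closed sets $C_{n}=\VV((nv-u)^{+})$ works, and the minimal-prime step is valid: under (iii) every prime is inclusion-maximal, hence coincides with any minimal prime beneath it, and the standard criterion (for each $a\in\mathfrak{p}$ with $\mathfrak{p}$ a minimal prime there is $b\notin\mathfrak{p}$ with $|a|\wedge|b|=0$) does yield that $\VV(v)$ is open. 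The one thing worth flagging is that this criterion is itself an imported fact requiring a Prime Building Lemma argument of its own; you could shortcut it entirely by observing that under (iii) the space $X=\Spec{V}=\Max{V}$ is a compact Hausdorff spectral space, in which the complement $X\setminus\VV(v)$ is compact open (as noted in the paper's Remark following Lemma \ref{l:spectop}), hence closed, so $\VV(v)$ is clopen with no appeal to minimal primes. Either way the argument closes, and your proof is a legitimate replacement for the citation.
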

\begin{proof} See \cite[Theorem 55.1]{dar}, where these equivalences are proved (in the more general setting of lattice-ordered groups) up to minor modifications. Also see \cite[6.3.2]{cdm} for proofs in the equivalent language of MV-algebras.
\end{proof}
We note the following consequence.
\begin{corollary}\label{c:ha_implies_bool} For any unital vector lattice $(V,u)$,  if $V$ is  hyperarchimedean then
$\Max{V}$ is  Boolean.
\end{corollary}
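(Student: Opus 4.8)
The plan is to combine the two characterisations already in hand. First I would observe that hyperarchimedeanity forces the distinguished weak unit $u$ to be a strong unit, as recalled in the introduction (citing \cite[Thm 55.1.d]{dar}); hence $(V,u)$ in fact lies in $\uVL$, and Lemma \ref{l:spectop} becomes applicable. That lemma tells us that $\Spec{V}$ is a spectral space and that $\Max{V}$ is compact Hausdorff. Next I would invoke Lemma \ref{l:ha_char}, whose equivalence (i)$\Leftrightarrow$(iii) supplies the crucial collapse $\Spec{V}=\Max{V}$. Thus the single underlying space $X:=\Spec{V}=\Max{V}$ is \emph{simultaneously} spectral and compact Hausdorff, and it remains only to deduce that such a space is Boolean, i.e.\ totally disconnected.

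The heart of the argument is the general topological fact that a Hausdorff spectral space is automatically Boolean. By the definition of spectrality recalled above, the compact open subsets of $X$ form a basis for the topology. But in any Hausdorff space every compact subset is closed, so each such basic compact open set is in fact clopen. Hence $X$ possesses a basis of clopen sets, that is, $X$ is zero-dimensional; being moreover compact Hausdorff, it is totally disconnected. This is precisely the assertion that $\Max{V}$ is Boolean.

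I expect the only point needing care to be the transition from the word \emph{spectral} to the existence of a basis of compact open sets: one must use the precise definition given above (compact opens forming a basis closed under finite intersections) rather than merely the basis of \emph{closed} sets $\VV(a)$ from \textup{(\ref{eq:zariski})}. Everything else is formal. As an alternative that bypasses the abstract spectral-space machinery, one could argue directly that when every prime is maximal each complement $\Spec{V}\smallsetminus\VV(a)$ is clopen, producing the complementary clopen partition from the Boolean elements furnished by Lemma \ref{l:ha_char}(ii); but the spectral-space route above is the shorter of the two.
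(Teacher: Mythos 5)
Your proposal is correct and follows the same route as the paper: strong unit from hyperarchimedeanity, $\Spec V=\Max V$ via Lemma \ref{l:ha_char}, and then the standard fact that a Hausdorff spectral space is Boolean (which the paper simply cites to Johnstone, while you spell out the compact-open-implies-clopen argument). No gaps.
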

\begin{proof} Since $V$ is hypearchimedean, $u$ is a strong unit. By Lemma \ref{l:ha_char} we have $\Spec{V}=\Max{V}$, hence $\Spec{V}$ is Hausdorff by Lemma \ref{l:spectop}. But, by a standard argument, a Hausdorff spectral space is the same thing as a Boolean space  --- see,  e.g., \ \cite[Theorem 4.2]{johnstone}, where spectral spaces are called ``coherent spaces''.
\end{proof}

\subsection{The Yosida Representation}\label{ss:yosida} 
The main tool for the analysis of Archimedean unital vector lattices is the Yosida representation, which we outline here. It embeds the study of such vector lattices in the study of $\C{(X)}$, the vector lattice of continuous real-valued functions on a topological space $X$. The effect is to convert algebra into topology, and vice-versa. 

The set $\C{(X)}$ is a vector lattice under pointwise addition, scalar multiplication, and order; the function $0_{X}$ constantly equal to $0$ on $X$ is the zero element of the vector space. If $X$ is compact, the function $1_{X}$ constantly equal to $1$ over $X$ is a strong unit of $\C{(X)}$ by the Extreme Value Theorem. We  always tacitly consider $\C{(X)}$ endowed with the distinguished unit $1_{X}$. Finally, let us recall that a subset  $S\subseteq \C{(X)}$ is said to  \emph{separate  the points of $X$} if for any $x\neq y \in X$ there is $f \in S$ with $f(x)\neq f(y)$.

Whatever its antecedents, the representation was fully articulated in Yosida\rq{}s landmark paper \cite{yosida}.

\begin{theorem}[The Yosida Representation]\label{t:yosida} Let $(V, u)$ be a  vector lattice with strong unit $u$, and suppose that $V$ is Archi\-me\-dean.
\begin{enumerate}
\item[(a)]For each $\mathfrak{m}\in\Max{V}$, there exists a \emph{unique} isomorphism 
\begin{align*}
\iota_{\mathfrak{m}}\colon (V/\mathfrak{m},u/\mathfrak{m})\to (\R,1)
\end{align*}
in $\uVL$. Upon setting
\begin{align*}
\hat{v}(\mathfrak{m}):=\iota_{\mathfrak{m}}(v/\mathfrak{m}) \in \R\,,
\end{align*}
each $v \in V$ induces a  function 
\begin{align*}
\hat{v}\colon \Max{V}\to \R
\end{align*} 
that  is continuous with respect to the spectral topology on the domain and the Euclidean topology on the co-domain. 
\item[(b)]Set $X=\Max{V}$. The map
\begin{align*}
\widehat{\cdot} \,\,\colon (V,u) \longrightarrow (\C{(X)},1_{X})
\end{align*}
given by \textup{(a)} is a monomorphism in $\uVL$ whose image $\widehat{V}\subseteq \C{(X)}$ separates the points of $X$.
\item[(c)] $X$ is unique up to homeomorphism with respect to its properties. More explicitly, if $Y$ is any compact Hausdorff space, and $e\colon (V,u)\to (\C{(Y)},1_{Y})$ is any monomorphism in $\uVL$ whose image $e(V)\subseteq \C{(Y)}$ separates the points of $Y$, then there exists a unique homeomorphism  $f : Y \to X$ such that $(e(v))(y) = \widehat{v} (f(y))$ for all $y \in Y$.
\end{enumerate} 
\end{theorem}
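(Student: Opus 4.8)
The plan is to treat the three parts in turn, the real substance lying in the injectivity asserted in (b) and in the surjectivity of the comparison map in (c).

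For part (a) I would first produce $\iota_{\mathfrak m}$. Since $\mathfrak m$ is maximal, $V/\mathfrak m$ is a totally ordered Archimedean vector lattice in which $u/\mathfrak m$ is a strong unit, so the H\"older-type assignment $v/\mathfrak m \mapsto \inf\{r \in \R : v/\mathfrak m \le r\,(u/\mathfrak m)\}$ is a well-defined $\R$-linear order-embedding into $\R$ sending $u/\mathfrak m$ to $1$; its image is a nonzero $\R$-subspace of $\R$, hence all of $\R$, so it is the desired isomorphism. Uniqueness is immediate, since any unital morphism $V/\mathfrak m \to \R$ respects the order and fixes $u/\mathfrak m \mapsto 1$, and is therefore forced to agree with that cut. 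For the continuity of $\hat v$ I would argue at a point $\mathfrak m_{0}$: writing $r = \hat v(\mathfrak m_{0})$ and fixing $\epsilon > 0$, put $w = (\epsilon u - |v - r u|)^{+}$. Then $\hat w = (\epsilon - |\hat v - r|)^{+}$ pointwise, so $\{\mathfrak m : |\hat v(\mathfrak m) - r| < \epsilon\}$ is exactly the cozero set $\{\mathfrak m : w \notin \mathfrak m\} = \Max V \setminus (\VV(w) \cap \Max V)$, which is open and contains $\mathfrak m_{0}$; this gives continuity at $\mathfrak m_{0}$.

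For part (b), that $\widehat{\cdot}$ is a unital morphism is routine, as each $\iota_{\mathfrak m}$ is one and all operations are computed pointwise (in particular $\hat u = 1_{X}$); that $\widehat V$ separates points is immediate, because distinct $\mathfrak m \ne \mathfrak m'$ differ as sets, so some $v$ lies in one but not the other and $\hat v$ is then $0$ at one and nonzero at the other. The heart of the matter is injectivity, i.e.\ $\bigcap \Max V = \{0\}$. Given $v \ne 0$ I pass to $|v| > 0$ and set $\lambda = \inf\{q \in \R : |v| \le q u\}$, a genuine positive real: it is finite because $u$ is strong, and $\lambda > 0$ since $\lambda = 0$ would give $n|v| \le u$ for all $n$, against the Archimedean property. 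That property also forces $|v| \le \lambda u$ and $|v| \not\le q u$ for every $q < \lambda$, from which one checks $u \not\le n(\lambda u - |v|)$ for all $n$; hence the principal ideal $\langle \lambda u - |v|\rangle = \langle\, |v| - \lambda u\,\rangle$ is proper and extends (by Zorn, no proper ideal containing $u$) to a maximal ideal $\mathfrak m$. Since $|v| - \lambda u \in \mathfrak m$ while $u \notin \mathfrak m$, we get $\widehat{|v|}(\mathfrak m) = \lambda \ne 0$, so $|v| \notin \mathfrak m$ and a fortiori $v \notin \mathfrak m$. I expect this semisimplicity step to be the main obstacle, since it is precisely what powers the whole representation.

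For part (c) I send $y \in Y$ to the kernel $\mathfrak m_{y} := \{v : e(v)(y) = 0\}$ of the unital morphism $v \mapsto e(v)(y)$; its quotient embeds in $\R$, hence is totally ordered and Archimedean, so $\mathfrak m_{y} \in \Max V$, and I set $f(y) = \mathfrak m_{y}$. The induced injection $V/\mathfrak m_{y} \to \R$ is $\R$-linear and unital, hence onto, so by the uniqueness in (a) it is $\iota_{\mathfrak m_{y}}$, giving $e(v)(y) = \hat v(f(y))$ for all $v, y$. Since the continuous maps $\hat v$ separate points of the compact Hausdorff space $X$, the spectral topology on $X$ coincides with the initial topology they induce (a continuous bijection from a compact space to a Hausdorff space is a homeomorphism); thus $f$ is continuous because each $\hat v \circ f = e(v)$ is, and $f$ is injective by the displayed identity together with the fact that $e(V)$ separates the points of $Y$. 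For surjectivity, the second delicate point, I use that $\widehat V$, a point-separating sub-vector-lattice of $\C(X)$ containing the constants on a compact Hausdorff space, is uniformly dense by the lattice form of the Stone--Weierstrass theorem: were some $\mathfrak m \in X$ missed, then $f(Y)$ would be closed and avoid $\mathfrak m$, and combining Urysohn's lemma with density would yield $v$ whose $\hat v$ lies within $\tfrac{1}{3}$ of a function equal to $1$ at $\mathfrak m$ and $0$ on $f(Y)$, whence $v' := (|v| - \tfrac{1}{3} u)^{+}$ has $\widehat{v'}$ vanishing on $f(Y)$ but positive at $\mathfrak m$; then $e(v') = \widehat{v'} \circ f = 0$ forces $v' = 0$ by injectivity of $e$, a contradiction. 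Hence $f$ is a continuous bijection of compact Hausdorff spaces, so a homeomorphism, and it is the unique map satisfying the identity because the $\hat v$ separate the points of $X$.
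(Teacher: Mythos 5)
The paper does not prove this theorem: it is quoted as classical, with a pointer to Yosida's original article, so there is no in-paper argument to compare yours against. Judged on its own, your proof is correct and is essentially the standard one. Part (a) is H\"older's theorem in the unital vector-lattice setting, and your continuity argument via $w=(\epsilon u-|v-ru|)^{+}$ correctly identifies the preimage of the $\epsilon$-ball as the cozero set $\{\mathfrak{m}: w\notin\mathfrak{m}\}$, which is spectrally open. In (b) the semisimplicity step is the crux, and your $\lambda=\inf\{q:|v|\leq qu\}$ argument is sound: the Archimedean property gives both $\lambda>0$ and $|v|\leq\lambda u$ (the latter because $(|v|-\lambda u)^{+}$ is infinitely small), and the computation showing $u\not\leq n(\lambda u-|v|)$ makes the principal ideal proper, so Zorn applies; note that you are implicitly using the equivalence, asserted in the paper's preliminaries, between inclusion-maximal ideals and ideals with totally ordered Archimedean quotient. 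In (c) the identification of the spectral topology with the initial topology induced by the $\hat{v}$ (compact-to-Hausdorff bijection), and the surjectivity argument via the lattice form of Stone--Weierstrass plus Urysohn and the truncation $v'=(|v|-\tfrac13 u)^{+}$, are both standard and correct; your sub-vector-lattice $\widehat{V}$ does contain all constants and satisfies two-point interpolation, so the density theorem applies. I see no gaps.
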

\section{Representation by locally constant functions}\label{s:representation}
Let $X$ and $Y$ be topological spaces. A continuous function $f\colon X \to Y$ is \emph{locally constant at $x\in X$} if there exists an open set $U\subseteq X$ containing $x$ such that $f(x')=f(x)$ for each $x' \in U$; and $f$ is \emph{locally constant} if it is locally constant at each $x\in X$. We write $\LC{(X)}$ to denote the set of all locally constant real-valued functions $X \to \R$, where the set of real numbers is endowed with its Euclidean topology; thus $\LC{(X)}\subseteq \C{(X)}$. We also write  $\K{(X)}\subseteq \C{(X)}$ for the collection of all \emph{characteristic functions} on $X$, that is,  continuous maps on $X$ taking values in the discrete subspace $\{0,1\}$ of $\R$. Under pointwise order, the set $\K{(X)}$ forms a Boolean algebra  with negation given  by $\neg \chi = 1_{X}-\chi$, for $\chi \in \K{(X)}$, where the subtraction is pointwise.

\smallskip  The following basic fact will be   often invoked. 
\begin{lemma}\label{l:rep_clopens}Let $X$ be a compact topological space, and consider $f\in \C{(X)}$. The following are equivalent.
\begin{enumerate}
\item[(i)] $f\in\LC{(X)}$.
\item[(ii)] $f$ has finite range.
\item[(iii)] There exists a unique finite partition of $X$ into clopen sets $\{C_{1},\ldots,C_{l}\}$ such that $f$ agrees with a constant function $r_{i}\in\R$ on each $C_{i}$ and, moreover, $r_{i}\neq r_{j}$ whenever $i\neq j$.
\item[(iv)] There exists a unique decomposition of $f$ into a finite linear combination $f=r_{1}\chi_{1}+\cdots+r_{k}\chi_{k}$ of characteristic functions on $X$ with $k$ minimal.
\end{enumerate}
\end{lemma}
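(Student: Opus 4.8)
The plan is to establish the four conditions equivalent by the cycle (i) $\Rightarrow$ (ii) $\Rightarrow$ (iii) $\Rightarrow$ (iv) $\Rightarrow$ (i), and then to read off the two uniqueness assertions from the very arguments that produce (iii) and (iv). Compactness of $X$ should enter only once, in the first implication; the remaining links are either pure point-set topology or elementary linear bookkeeping.

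For (i) $\Rightarrow$ (ii) I would cover $X$ by the open sets witnessing local constancy: each $x$ has an open $U_{x}\ni x$ with $f\equiv f(x)$ on $U_{x}$, and a finite subcover $U_{x_{1}},\dots,U_{x_{n}}$ confines the range of $f$ to $\{f(x_{1}),\dots,f(x_{n})\}$. For (ii) $\Rightarrow$ (iii), writing the range as $\{r_{1},\dots,r_{l}\}$ with the $r_{i}$ distinct, I would put $C_{i}=f^{-1}(r_{i})$; each $C_{i}$ is closed as the preimage of a closed point and open because a sufficiently small interval around $r_{i}$ meets the range only in $r_{i}$, so that $C_{i}$ is the preimage of that interval. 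Thus $\{C_{1},\dots,C_{l}\}$ is the required clopen partition, and it is unique: the constant values on the blocks of any such partition must exhaust the range, so the blocks are forced to be the level sets $f^{-1}(r_{i})$. Finally (iv) $\Rightarrow$ (i) is immediate once one notes that each characteristic function $\chi_{C}$ is locally constant, since every point lies in one of the complementary clopen sets $C$ or $X\smallsetminus C$ on which $\chi_{C}$ is constant, and that $\LC(X)$ is a linear subspace of $\C(X)$ (intersect the two witnessing neighbourhoods); hence any finite linear combination of characteristic functions is locally constant.

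The implication (iii) $\Rightarrow$ (iv), together with the uniqueness claimed in (iv), carries the real content and is where I expect the main obstacle. Existence is easy: from the partition of (iii) I obtain the disjointly supported expression $f=\sum_{i}r_{i}\chi_{C_{i}}$, and discarding the block on which $f$ vanishes (if any) leaves a finite linear combination of characteristic functions, so a shortest such expression exists. The canonical candidate is precisely this level-set decomposition with the zero block omitted, whose length equals the number of nonzero values of $f$. My plan for uniqueness would be to show that in a minimal decomposition the participating characteristic functions may be taken pairwise disjoint with distinct nonzero coefficients --- two terms carrying the same value can be merged, and the supports sharpened to avoid overlap --- and that a decomposition of $f$ into \emph{pairwise disjoint} characteristic functions is necessarily indexed by the nonzero level sets, with each support contained in, hence equal to, some $f^{-1}(r_{i})$ and each coefficient equal to the corresponding value. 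The step I would scrutinize most carefully is isolating exactly the right minimality condition, namely that the relevant decompositions are those by pairwise disjoint (orthogonal) characteristic functions: without disjointness one can build strictly shorter, non-canonical expressions, so matching the notion of ``minimal'' in (iv) to the orthogonal decomposition is the delicate point on which the uniqueness stands or falls.
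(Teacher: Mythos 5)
Your cycle (i) $\Rightarrow$ (ii) $\Rightarrow$ (iii) $\Rightarrow$ (iv) $\Rightarrow$ (i) is sound in every link except the last substantive one. The compactness argument for (i) $\Rightarrow$ (ii) is the paper's; your (ii) $\Rightarrow$ (iii) via the level sets $C_i=f^{-1}(r_i)$, made open by pulling back small disjoint intervals around the finitely many values, is if anything cleaner than the paper's route (which extracts a finite subcover and then merges blocks carrying equal values), and it delivers the uniqueness of the partition for free, as you say. The observation that (iv) $\Rightarrow$ (i) needs only that characteristic functions are locally constant and that $\LC{(X)}$ is a linear subspace of $\C{(X)}$ is also fine.

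The gap is exactly where you feared: the uniqueness clause of (iv) under the literal reading of ``$k$ minimal''. Your plan --- show that a minimal decomposition may be taken pairwise disjoint, then identify it with the level-set decomposition --- cannot be carried out, because minimal decompositions need not be disjoint and the disjoint (level-set) decomposition need not be minimal. Take $X=\{a,b,c\}$ discrete and $f(a)=1$, $f(b)=3$, $f(c)=2$: the level-set decomposition has three terms, yet $f=\chi_{\{a,b\}}+2\chi_{\{b,c\}}$ has two, so the canonical candidate is not a shortest one. Worse, shortest decompositions need not be unique: for $X=\{a_1,a_2,a_3,a_4\}$ with $f(a_i)=i$, no two-term decomposition exists (a sum $s_1\xi_1+s_2\xi_2$ attains at most three distinct nonzero values, while $f$ attains four), and yet $f=\chi_{\{a_1,a_3\}}+2\chi_{\{a_2,a_3\}}+4\chi_{\{a_4\}}=\chi_{\{a_1,a_4\}}+2\chi_{\{a_2\}}+3\chi_{\{a_3,a_4\}}$ exhibits two distinct three-term decompositions. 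So statement (iv), read literally, is false, and no argument along your lines can rescue it. For what it is worth, the paper's own proof stumbles at the same spot: it declares $k=l$ ``palpably'' minimal and then infers that the nonzero range of $f$ equals $\{s_1,\dots,s_k\}$ for any competing decomposition of length $k$, both of which the first example refutes. The assertion is correct --- and is all that is used later in the paper --- only for decompositions into \emph{pairwise disjoint} characteristic functions with distinct nonzero coefficients (equivalently, indicators of the nonzero level sets), where uniqueness is just a restatement of (iii). Your instinct to isolate the minimality condition as the point on which everything stands or falls was exactly right; the repair is to build disjointness into the class of decompositions being compared rather than to try to derive it from minimality.
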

\begin{proof} 
(i $\Leftrightarrow$ iii) To prove the non-trivial implication, assume $f \in \LC{(X)}$. For each $x \in X$, let $U_{x}$ be an open neighbourhood of $x$ over which $f$ is constant. Then $\{U_{x}\}_{x\in X}$ is an open cover of $X$. By compactness, there exists a finite subcover $\{C_{1},\ldots,C_{l}\}$; say the constant value of $f$ at $C_{i}$ is $r_{i}\in \R$, $i=1,\ldots,{l}$. Then each $C_{i}$ must be closed, hence clopen: for the complement of $C_{i}$ is the  union of open sets $\bigcup_{1\leq j\leq l \,,\,j\neq i} C_{j}$.
Next suppose $r_i=r_{j}$ and $i\neq j$. Re-indexing if necessary, we may assume $i=1$ and $j=l$. Set
$C'_{1}=C_{1}\cup C_{l}$. Since clopens are closed under finite unions,  the  collection $\{C'_{1},C_{2},\ldots,C_{l-1}\}$ is a partition of $X$ into clopen sets, and $f$ is constant over each element of the partition by construction. This proves the existence assertion. To prove uniqueness, suppose $\{C_{1},\ldots, C_{l}\}$ is as in the statement, and  $\{D_{1},\ldots, D_{l}\}$ is a further partition of $X$ into clopen sets of minimal cardinality $l$ such that $f$ takes value $s_{i}$ on each $D_{i}$, with $s_{i}\neq s_{j}$ whenever $i \neq j$. Then the range of $f$ equals both
$\{r_{1},\ldots,r_{l}\}$ and $\{s_{1},\ldots,s_{l}\}$, so that we must have $\{r_{1},\ldots,r_{l}\}=\{s_{1},\ldots,s_{l}\}$. Re-indexing if necessary, we may safely assume $r_{i}=s_{i}$ for each $i=1,\ldots, l$. Then, since $r_{i}\neq r_{j}$ if $i \neq j$, $C_{i}=f^{-1}(r_{i})=D_{i}$ holds for each $i$, as was to be shown.
 
 \smallskip \noindent The implication (iii $\Rightarrow$ ii) is immediate. To prove  (ii $\Rightarrow$ i), let us display the finite range of $f$ as $r_{1}<r_{2}<\cdots<r_{l}$. Since $\R$ is Hausdorff, we may choose disjoint open sets $I_{i}\subseteq \R$ such that $r_{i}\in I_{i}$. For each $x \in X$ there is a unique index $i$ such that $f(x)=r_{i}$. Then $f$ is locally constant on the open neighbourhood $f^{-1}(I_{i})$ of $x$, as was to be shown.
 
\smallskip \noindent (iv $\Rightarrow$ ii) Obvious.

\smallskip \noindent (iii $\Rightarrow$ iv) Let $\{C_{1},\ldots,C_{l}\}$ be a partition as in the hypothesis, and let $r_{i}$ be the value of $f$ over $C_{i}$. For each $i=1,\ldots,l$, define $\chi_{i}\colon X \to \{0,1\}$ by $\chi_{i}(x)=1$ if $x\in C_{i}$, and $\chi_{i}(x)=0$ otherwise. Then $\chi_{i}$ is continuous because $C_{i}$ is both closed and open. Obviously $f=\sum_{i=1}^{l}r_{i}\chi_{i}$. It follows that, since $r_i \neq r_j$ for $i \neq j$, the range of $f$ has cardinality $l$. If no $r_i = 0$ then $ k = l$ is palpably the minimal positive integer that affords  a decomposition of $f$ into a linear combination of characteristic functions, while if some $r_i = 0$, say $r_l = 0$, then the decomposition reduces to $f=\sum_{i=1}^{l-1}r_{i}\chi_{i}$ and $k = l-1$ is this minimal positive integer. It remains to show that if $f=\sum_{i=1}^{k}s_{i}\xi_{i}$ is another such decomposition, with $s_{i}\in \R$ and each $\xi_{i}\colon X \to \{0,1\}$ a characteristic function, then $s_{i}=r_{i}$ and $\chi_{i}=\xi_{i}$ hold to within re-indexing. Indeed, by the minimality of $k$ we must have $s_{i}\neq s_{j}$ whenever $i\neq j$. Hence the set of nonzero members of the range of $f$ equals both $\{r_1,\ldots,r_k\}$ and $\{s_1,\ldots,s_k\}$, so that $r_{i}=s_{i}$ for each $i$ upon re-indexing. If it then were the case that $\chi_{i}\neq \xi_{i}$ for some $i$, we would have $C_{i}\neq D_{i}$, from which it readily follows that there is a point $x\in X$ with $f(x)=r_{i}$ and $f(x)=r_{j}$ for $i \neq j$, a contradiction. This completes the proof.
\end{proof}
 As a first application of this basic lemma, we obtain:
\begin{lemma}\label{l:lc_implies_ha} Let $X$ be any compact topological space. Then  $\LC{(X)}$ is a  hyperarchimedean vector lattice under pointwise operations. 
Therefore, the inclusion map $\LC{(X)}\hookrightarrow \C{(X)}$ is a monomorphism of unital vector lattices if $\LC{(X)}$ is endowed with the strong unit $1_X$.  
\end{lemma}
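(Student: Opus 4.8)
The plan is to exhibit $\LC{(X)}$ as a sub-vector-lattice of $\C{(X)}$ containing the constant function $1_{X}$, and then to verify the hyperarchimedean property directly via the criterion of Lemma~\ref{l:ha_char}(ii).

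First I would establish closure of $\LC{(X)}$ under the pointwise operations. Given $f,g\in\LC{(X)}$, Lemma~\ref{l:rep_clopens} furnishes finite partitions of $X$ into clopen sets over which $f$ and $g$ are respectively constant. Passing to the common refinement, which is again a finite partition into clopen sets, both $f$ and $g$ are simultaneously constant on each block, and hence so are $f+g$, $rf$ (for $r\in\R$), $f\vee g$, and $f\wedge g$. Each of these therefore has finite range, so lies in $\LC{(X)}$ by the same lemma. Consequently $\LC{(X)}$ is closed under all the vector-lattice operations and inherits the vector-lattice axioms from $\C{(X)}$ as a substructure. Since $1_{X}$ is constant it lies in $\LC{(X)}$, and it is a strong unit there because every $f\in\LC{(X)}^{+}$ is bounded (it has finite range), whence $f\leq n\,1_{X}$ for a suitable integer $n$.

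Next I would verify the hyperarchimedean property using Lemma~\ref{l:ha_char}(ii). Fix $f\in\LC{(X)}$; then $|f|\in\LC{(X)}$ has finite range, so its set of strictly positive values is finite and admits a positive lower bound $\delta$. Choosing any integer $n\geq 1$ with $n\delta\geq 1$, the function $w=n|f|\wedge 1_{X}$ takes the value $1$ at every point where $f\neq 0$ and the value $0$ elsewhere; that is, $w$ is the characteristic function $\chi_{C}$ of $C=\{x\in X\mid f(x)\neq 0\}$. Because $f$ is locally constant, both $C$ and its complement are open, so $C$ is clopen and $w\in\K{(X)}\subseteq\LC{(X)}$. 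Finally $w\wedge(1_{X}-w)=\chi_{C}\wedge\chi_{X\setminus C}=0$, so $w$ is a Boolean element. Thus Lemma~\ref{l:ha_char}(ii) applies and $\LC{(X)}$ is hyperarchimedean.

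It then remains only to observe that the inclusion $\LC{(X)}\hookrightarrow\C{(X)}$ is a $\uVL$-morphism: it preserves addition, scalar multiplication, and the lattice operations because these are computed pointwise in both lattices, and it sends $1_{X}$ to $1_{X}$. Being an inclusion it is injective, hence a monomorphism. I expect no serious obstacle here; the only point requiring genuine care is the identification of $n|f|\wedge 1_{X}$ with $\chi_{C}$ together with the verification that $C$ is clopen, both of which follow directly from the finite-range/clopen-partition characterisation in Lemma~\ref{l:rep_clopens}.
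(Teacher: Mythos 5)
Your proposal is correct and follows essentially the same route as the paper: closure of $\LC{(X)}$ under the pointwise operations via the finite-range characterisation of Lemma \ref{l:rep_clopens}, and the hyperarchimedean property via Lemma \ref{l:ha_char}(ii) by scaling so that $n|f|\wedge 1_{X}$ becomes the characteristic function of the clopen support of $f$. Your handling of $|f|$ directly (rather than reducing to $f>0$) and the explicit common-refinement argument for closure are only minor presentational differences.
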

\begin{proof} Lemma \ref{l:rep_clopens} entails that the set $\LC{(X)}$ is closed under pointwise addition, scalar multiplication, and finitary infima and suprema:
indeed, the continuous functions $f \colon X \to \R$ having finite range are clearly closed under all mentioned operations.  

To show that $\LC{(X)}$ is hyperarchimedean, let $f\colon X \to \R$ be a locally constant function. By Lemma \ref{l:ha_char} it suffices to show that $n|f| \wedge 1_{X}$ is a characteristic function for $n\geq 1$ a sufficiently large integer. We may safely assume  that $f > 0$. Using Lemma \ref{l:rep_clopens}, let us write  $f=\sum_{i=1}^k r_{i}\chi_{i}$, with $0 < r_1 < r_2 < \dots < r_k$. Choose an integer $n_0 \geq 1$ such that $n_0 r_1 \geq 1$. Then $n_{0}f \wedge 1_{X}$ takes value $1$  over $C_{i}=\chi_{i}^{-1}(1)$, $i=1,\ldots,k$, and value $0$ elsewhere, hence is a characteristic function. This completes the proof.  
\end{proof}
In light of Lemma \ref{l:lc_implies_ha}, whenever the space $X$ is compact we  always tacitly consider $\LC{(X)}$ endowed with the distinguished strong unit $1_{X}$, and hence as a subobject of $\C{(X)}$ in the category $\uVL$.

\smallskip 
The next result may perhaps be considered folklore. We provide a proof, since we did not succeed  in locating one in the literature.
\begin{lemma}\label{l:epis} Let $X$ be a Boolean space, and let $B\subseteq \K{(X)}$ be a subalgebra of the Boolean algebra of all characteristic functions on $X$. If $B$ separates the points of $X$ then $B=\K{(X)}$.
\end{lemma}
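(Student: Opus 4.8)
The plan is to identify $\K{(X)}$ with the Boolean algebra of clopen subsets of $X$: a characteristic function $\chi\in\K{(X)}$ is continuous precisely when $\chi^{-1}(1)$ is clopen, so $\chi\mapsto\chi^{-1}(1)$ is a Boolean isomorphism of $\K{(X)}$ onto $\mathrm{Clopen}(X)$, carrying $B$ to a subalgebra of clopen sets that I will continue to call $B$. Under this identification the hypothesis reads: for all $x\neq y$ in $X$ there is a member of $B$ containing exactly one of $x,y$, and the goal becomes the assertion that $B$ contains \emph{every} clopen subset of $X$. Since $B$ is closed under complementation, I may always arrange a separating set to contain whichever of $x,y$ I please.

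The heart of the argument is a two-stage compactness reduction in the spirit of the Stone--Weierstrass theorem. Fix an arbitrary clopen set $C\subseteq X$; I want to show $C\in B$. First I localize at a point. Fix $x\in C$. For each $y\in X\smallsetminus C$, separation --- after replacing the separating set by its complement if necessary --- yields a set $C_{y}\in B$ with $x\in C_{y}$ and $y\notin C_{y}$. The complements $X\smallsetminus C_{y}$, as $y$ ranges over $X\smallsetminus C$, then cover $X\smallsetminus C$, which, being closed in the compact space $X$, is itself compact. Choosing a finite subcover indexed by $y_{1},\ldots,y_{m}$ and setting $V_{x}:=C_{y_{1}}\cap\cdots\cap C_{y_{m}}$ produces an element of $B$ (a subalgebra is closed under finite intersection) that misses $X\smallsetminus C$ and contains $x$; that is, $x\in V_{x}\subseteq C$.

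Having produced, for every $x\in C$, a member $V_{x}\in B$ with $x\in V_{x}\subseteq C$, I run compactness a second time. The family $\{V_{x}\}_{x\in C}$ is an open cover of $C$, which is closed in $X$ and hence compact, so finitely many of them, say $V_{x_{1}},\ldots,V_{x_{p}}$, already cover $C$; since each is contained in $C$, their union equals $C$. As $B$ is closed under finite unions, $C=V_{x_{1}}\cup\cdots\cup V_{x_{p}}\in B$. Because $C$ was an arbitrary clopen set, this gives $B=\mathrm{Clopen}(X)$, i.e.\ $B=\K{(X)}$.

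I do not anticipate a genuine obstacle; the only points needing care are bookkeeping. One must invoke closure of $B$ under complements to orient each separating set correctly, and one must apply compactness to the two \emph{distinct} clopen (hence compact) sets $X\smallsetminus C$ and $C$ in the right order --- the inner application manufactures a single element of $B$ squeezed between $x$ and $C$, while the outer application assembles these into $C$ itself. As a slicker but less elementary alternative, one could argue by Stone duality: the inclusion $B\hookrightarrow\K{(X)}$ is an injective Boolean homomorphism, so its dual is a continuous surjection $X\to Y$ onto the Stone space $Y$ of $B$; the separation hypothesis makes this surjection injective, hence (a continuous bijection from a compact space onto a Hausdorff one) a homeomorphism, whence the inclusion is an isomorphism and therefore an equality.
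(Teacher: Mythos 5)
Your main argument is correct, and it takes a genuinely different route from the paper. You prove the statement by a direct double application of compactness: for a fixed clopen $C$ and $x\in C$ you intersect finitely many separating sets to trap $x$ inside $C$ by a member of $B$, and then you cover $C$ by finitely many such members; both steps use only that $B$ is closed under the Boolean operations and that the relevant sets are compact (the edge cases $C=\emptyset$ and $C=X$ are handled because a subalgebra contains $\bot$ and $\top$). The paper instead reduces the claim to showing that the inclusion $B\hookrightarrow\K{(X)}$ is an \emph{epimorphism} of Boolean algebras, invokes the Banaschewski--Bruns theorem that epimorphisms in $\BA$ are surjective, and then uses the Yosida representation to identify $\{0,1\}$-valued homomorphisms on $\K{(X)}$ with evaluations at points of $X$, so that the separation hypothesis forces any two homomorphisms agreeing on $B$ to coincide. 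Your approach is more elementary and self-contained --- it needs no categorical input and no appeal to Theorem \ref{t:yosida}, and in fact works verbatim for any compact space $X$, not just a Boolean one --- while the paper's argument fits the lemma into the duality-theoretic machinery it develops elsewhere. Your alternative Stone-duality argument at the end (dualizing the inclusion to a continuous bijection from a compact space onto a Hausdorff one) is essentially a repackaging of the paper's proof and is also correct.
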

\begin{proof}By \cite[Lemma 4]{banschewskibruns} epimorphisms in the category of Boolean algebras coincide with surjective homomorphisms. It thus suffices to show that the inclusion map $B \hookrightarrow \K{(X)}$ is epic. That is to say,  given any two homomorphisms of Boolean algebras $g_1,g_2\colon \K{(X)}\to A$, if $g_{1}$ and $g_{2}$ agree on $B$, then  $g_{1}=g_{2}$. We prove the contrapositive statement. Suppose $g_{1}(\xi)\neq g_{2}(\xi)$ for some $\xi \in \K{(X)}$, but $g_{1}$ and $g_{2}$ agree on $B$. Since the 2-element Boolean algebra $\{0,1\}$ generates the variety of all Boolean algebras (see e.g.\ \cite[IV.1.12]{burrissank}), there is an onto homomorphism $h\colon A \twoheadrightarrow \{0,1\}$ that witnesses the inequality, i.e.\ $h(g_{1}(\xi))\neq h(g_{2}(\xi))$.
 Hence we may  assume $A=\{0,1\}$ without loss of generality. Then the inverse images $\mathfrak{a}=g_{1}^{-1}(0)$ and 
 $\mathfrak{b}=g_{2}^{-1}(0)$ clearly are distinct maximal ideals of $\K{(X)}$. Since $\K{(X)}$ is separating, by Theorem \ref{t:yosida} there are uniquely determined points $a\neq b \in X$ such that 
\[
\mathfrak{a}=\{\chi \in \K{(X)} \mid  \chi(a)=0\}\ \ ,\ \ \mathfrak{b}=\{\chi \in \K{(X)} \mid  \chi(b)=0\}\,,
\]
and, to within unique isomorphisms, we may  identify $g_{1}$, $g_{2}$ with 
evaluation at $a$, $b$, respectively. Since $B$ also is separating, there is $\chi \in B$ such that $\chi(a)=0$ and $\chi(b)=1$. Then
$g_{1}(\chi)=\chi(a)\neq \chi(b)=g_{2}(\chi)$, as was to be shown.
\end{proof}
\begin{lemma}\label{l:main_lemma}Let $X$ be a Boolean space, and let $V\subseteq \C{(X)}$ be a sublattice and linear subspace of $\C{(X)}$ that contains $1_{X}$, and is hyperarchimedean. The following are equivalent.
\begin{enumerate}
\item[(i)] $V$ separates the points of $X$.
\item[(ii)] $\K{(X)}\subseteq V$.
\item[(iii)] $\LC{(X)}= V$.
\end{enumerate}
\end{lemma}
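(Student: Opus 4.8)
The plan is to establish the cycle of implications, organized around a single key observation: the hyperarchimedean hypothesis forces every member of $V$ to be locally constant, so that $V \subseteq \LC{(X)}$ holds unconditionally under the standing assumptions. Once this is in hand, the equivalence (ii) $\Leftrightarrow$ (iii) becomes nearly formal, and I am left to connect the point-separation condition (i) with the containment (ii). Throughout I will use that $1_{X}$ is a strong unit of $V$: since $1_{X} \wedge v = 0$ with $v \in V^{+}$ forces $v = 0$ pointwise, $1_{X}$ is a weak unit, and weak units are strong in any hyperarchimedean vector lattice. Thus $(V,1_{X}) \in \uVL$ and Lemma \ref{l:ha_char} applies, the relevant point being that the Boolean elements of $V$ are precisely the characteristic functions lying in $V$: a function $w$ with $0 \leq w \leq 1_{X}$ and $w \wedge (1_{X}-w)=0$ must take values in $\{0,1\}$.

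For the key observation I would fix $f \in V$ and show that each fibre $f^{-1}(r)$, $r \in \R$, is clopen. Since $V$ is a linear subspace containing $1_{X}$, the translate $f - r\,1_{X}$ lies in $V$, and Lemma \ref{l:ha_char}(ii) produces an integer $n$ for which $\chi := n\,|f - r\,1_{X}| \wedge 1_{X}$ is a characteristic function. As $\chi(x)=0$ precisely when $f(x)=r$, the fibre $f^{-1}(r) = \chi^{-1}(0)$ is clopen. The nonempty fibres then form a pairwise disjoint open cover of the compact space $X$; disjointness forces a finite subcover to exhaust $X$, whence $f$ has finite range and so $f \in \LC{(X)}$ by Lemma \ref{l:rep_clopens}. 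This gives $V \subseteq \LC{(X)}$. Now (ii) $\Rightarrow$ (iii) follows because $\K{(X)} \subseteq V$, together with closure of $V$ under linear combinations, yields $\LC{(X)} \subseteq V$ via the decomposition in Lemma \ref{l:rep_clopens}(iv), and the reverse inclusion is the key observation; while (iii) $\Rightarrow$ (ii) is immediate from $\K{(X)} \subseteq \LC{(X)}$.

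It remains to handle (i). The implication (iii) $\Rightarrow$ (i) is routine: in a Boolean space the clopen sets separate points, and their characteristic functions lie in $\LC{(X)} = V$. For (i) $\Rightarrow$ (ii) I would pass to $B := V \cap \K{(X)}$ and check that it is a subalgebra of $\K{(X)}$: closure under $\vee$ and $\wedge$ uses that $V$ is a sublattice, while closure under $\neg\chi = 1_{X}-\chi$ uses that $V$ is a linear subspace containing $1_{X}$. The crucial claim is that $B$ separates points whenever $V$ does: given $x \neq y$ and $f \in V$ with $f(x)\neq f(y)$, the element $g = f - f(y)\,1_{X} \in V$ satisfies $g(y)=0$ and $|g|(x)>0$, so the characteristic function $n\,|g| \wedge 1_{X}$ furnished by Lemma \ref{l:ha_char}(ii) separates $x$ from $y$ and lies in $B$. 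An application of Lemma \ref{l:epis} then gives $B = \K{(X)}$, that is, $\K{(X)} \subseteq V$.

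I expect the main obstacle to be the key observation $V \subseteq \LC{(X)}$: the hyperarchimedean hypothesis must be leveraged precisely once, through the translate trick, to show that every fibre is clopen, after which compactness does the rest. The (i) $\Rightarrow$ (ii) step is conceptually the same device applied to a difference of values, packaged through Lemma \ref{l:epis}.
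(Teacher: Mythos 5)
Your proof is correct. The treatment of (i) $\Rightarrow$ (ii) (passing to $B = V \cap \K{(X)}$, the translate trick, Lemma~\ref{l:epis}), of (iii) $\Rightarrow$ (i) and (iii) $\Rightarrow$ (ii), and of the inclusion $\LC{(X)} \subseteq V$ under (ii) all match the paper. The genuine divergence is in the containment $V \subseteq \LC{(X)}$. The paper establishes it only under hypothesis (ii) and by contradiction: given $f \in V$ not locally constant at $x$, it forms the germinal ideal $\mathscr{O}_{x}$, invokes the Prime Building Lemma to produce a prime $\mathfrak{p}_{x} \supseteq \mathscr{O}_{x}$ avoiding $f$, and then uses $\K{(X)} \subseteq V$ together with the Yosida representation to show $\mathfrak{p}_{x}$ equals no $\mathfrak{m}_{y}$, yielding a non-maximal prime that contradicts Lemma~\ref{l:ha_char}(iii). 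You instead show directly that every fibre $f^{-1}(r) = \bigl(n\,|f - r\,1_{X}| \wedge 1_{X}\bigr)^{-1}(0)$ is clopen via Lemma~\ref{l:ha_char}(ii), and conclude by compactness and pairwise disjointness of the fibres that $f$ has finite range. Your route is more elementary (no spectra, no Yosida, no Prime Building Lemma) and proves the stronger, unconditional fact that any hyperarchimedean unital sublattice and subspace of $\C{(X)}$ containing $1_{X}$ sits inside $\LC{(X)}$; the paper's route instead exercises the spectral characterization (iii) of Lemma~\ref{l:ha_char} within machinery it has already built. Your preliminary care in checking that $1_{X}$ is a weak, hence strong, unit of $V$ (so that Lemma~\ref{l:ha_char} is applicable) and that Boolean elements of $V$ are exactly the characteristic functions in $V$ is exactly what is needed and is implicit in the paper as well.
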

\begin{proof} (ii $\Rightarrow$ i)  Let $x\neq y \in X$. Since $X$ is  Hausdorff, there are disjoint open neighbourhoods $U$ and $V$ of $x$ and $y$, respectively. Since $X$ has a basis of clopen sets, there is a clopen $C\subseteq U$ that contains $x$. The characteristic function $\chi_{C}\colon X \to \{0,1\}$ is continuous, because $C$ is clopen, and  separates $x$ from $y$.

\smallskip \noindent   (i $\Rightarrow$ ii) Suppose that $V$ separates the points of $X$.  We claim that the set $B=K{(X)}\cap V$ also does. Indeed, let $x\neq y \in X$ be separated by $f\in V$, so that $f(x)\neq f(y)$. Then
$f - f(x) \in V$ vanishes at $x$, and also separates $x$ from $y$. We may therefore safely assume that $f(x)=0$ and $f(y)\neq 0$. Now for each integer $n\geq 1$ the element $n|f|\wedge 1_{X} \in V$ also separates $x$ from $y$. Since $V$ is hyperarchimedean,
 by  Lemma \ref{l:ha_char} the function  $n|f|\wedge 1_{X}$ is a characteristic function for $n$ sufficiently large. This settles our claim. Observe that $B$ is a Boolean subalgebra of $\K{(X)}$, because $V$ is closed under meets and joins, and also under the negation operation $\neg \chi = 1_{X}-\chi$ of $\K{(X)}$. Lemma \ref{l:epis} now entails $B=\K{(X)}$.

\smallskip \noindent   (ii $\Rightarrow$ iii) If $\K{(X)}\subseteq V$ then $\LC{(X)}\subseteq V$, because $V$ is a linear space and by Lemma \ref{l:rep_clopens} each element of $\LC{(X)}$ is a linear combination of characteristic functions.

To prove the converse inclusion, we argue by contradiction. Suppose that $f\in V\setminus\LC{(X)}$; we shall show that  $V$ is not hyperarchimedean. Let $x\in X$ be such that $f$ is not locally constant at $x$. We may safely assume that $f(x)=0$, for $f-f(x)$ vanishes at $x$, lies in $V$, and is not locally constant at $x$. Now $f$  vanishes on no open neighbourhood of $x$. Consider the germinal ideal of $V$ at $x$,
\[
\mathscr{O}_{x}:=\{g \in V \mid  \text{There is an open neighbourhood $U$ of $x$ such that $g(U)=\{0\}$}  \}\,.
\]
It is elementary to verify that $\mathscr{O}_{x}$ indeed is an ideal of $V$. It is also clear that $f \not \in \mathscr{O}_{x}$ by our choice of $f$. Next consider the maximal ideal of $V$ at $x$,
\[
\mathfrak{m}_{x}:=\{g \in V \mid  g(x)=0\}\,.
\]
Again, it is clear that $\mathfrak{m}_{x}$ indeed is a maximal ideal of $V$. Obviously 
$\mathscr{O}_{x}\subseteq \mathfrak{m}_{x}$. By the Prime Building Lemma, there exists a prime ideal 
$\mathfrak{p}_{x}$ of $V$ with $\mathscr{O}_{x}\subseteq \mathfrak{p}_{x}$ that still satisfies $f\not \in \mathfrak{p}_{x}$. 
We claim that $\mathfrak{p}_{x}$ cannot be maximal. For, arguing as in the proof of (ii $\Rightarrow$ i), one sees that 
$\K{(X)}\subseteq V$ separates the points of $X$. Hence   by  Theorem \ref{t:yosida} there is a unique point $y \in X$ such that $\mathfrak{p}_{x}=\mathfrak{m}_{y}$.  Then $x \neq y$, because $f \in\mathfrak{m}_{x}$ but 
$f \not \in \mathfrak{p}_{x}$. Therefore there is $\chi \in \K{(X)}$ with $\chi(x)= 0$ and $\chi(y)= 1$, so that 
$\chi  \in \mathfrak{m}_{x}$ and $\chi  \not \in \mathfrak{m}_{y}$. But then  $\chi^{-1}(0)$ is clopen, and hence an open 
neighbourhood of $x$ over which $\chi$ vanishes, so that $\chi \in \mathscr{O}_{x}$. Since 
$\mathscr{O}_{x}\subseteq \mathfrak{p}_{x}$, we infer $\chi \in \mathfrak{p}_{x}=\mathfrak{m}_{y}$,  contradicting 
$\chi(y)=1$. Hence $\mathfrak{p}_{x}$ is a non-maximal prime, and $V$ fails to be hyperarchimedean by Lemma 
\ref{l:ha_char}. 

\smallskip \noindent   (iii $\Rightarrow$ ii) Each characteristic function on $X$ has finite range, and thus belongs to $\LC{(X)}$ by Lemma \ref{l:rep_clopens}.
\end{proof}
We can now prove the main result of this section.
\begin{theorem}\label{t:main_rep_them} Let $(V,u)$ be a unital vector lattice. The following are equivalent.
\begin{enumerate}
\item[(i)] $V$ is hyperarchimedean.
\item[(ii)] There exists a Boolean space $X$ such that $(V,u)$ is isomorphic to  $\LC{(X)}$ in $\uVL$.
\end{enumerate}
Further,  any Boolean space $X$ satisfying \textup{(ii)} is homeomorphic to $\Max{V}$, and is therefore uniquely determined by $(V,u)$ to within
a homeomorphism.
\end{theorem}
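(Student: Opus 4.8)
The plan is to read off the equivalence from the lemmas already established, the genuine work having been done in Lemma~\ref{l:main_lemma}. I would first dispose of the implication (ii $\Rightarrow$ i): a Boolean space is compact, so $\LC(X)$ is hyperarchimedean by Lemma~\ref{l:lc_implies_ha}, and hyperarchimedeanness is plainly invariant under isomorphism in $\uVL$; hence $(V,u)\cong\LC(X)$ forces $V$ to be hyperarchimedean.

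For (i $\Rightarrow$ ii) I would argue as follows. If $V$ is hyperarchimedean then $u$ is a strong unit and $V$ is itself Archimedean (being its own trivial homomorphic image), so Yosida's representation applies. Setting $X:=\Max{V}$, Corollary~\ref{c:ha_implies_bool} makes $X$ a Boolean space, and Theorem~\ref{t:yosida}(b) provides a monomorphism $\widehat{\cdot}\colon(V,u)\to(\C(X),1_X)$ in $\uVL$ whose image $\widehat V$ separates the points of $X$. This image is a sublattice and linear subspace of $\C(X)$ containing $1_X=\widehat u$, and it is hyperarchimedean because $\widehat{\cdot}$ is an isomorphism onto $\widehat V$. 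Thus $\widehat V$ satisfies all the hypotheses of Lemma~\ref{l:main_lemma} together with its condition (i), whence that lemma yields condition (iii), namely $\LC(X)=\widehat V$. Composing the Yosida isomorphism $V\cong\widehat V$ with this identity gives $(V,u)\cong\LC(X)$ in $\uVL$.

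The uniqueness clause I would deduce from the uniqueness built into Yosida's theorem. Given any Boolean space $X$ with an isomorphism $\phi\colon(V,u)\to\LC(X)$ in $\uVL$, composing $\phi$ with the inclusion $\LC(X)\hookrightarrow\C(X)$ produces a monomorphism $e\colon(V,u)\to(\C(X),1_X)$ whose image is $\LC(X)$. On a Boolean space $\LC(X)$ separates points, since $\K(X)\subseteq\LC(X)$ and characteristic functions of clopen sets separate points, exactly as in the (ii $\Rightarrow$ i) step of Lemma~\ref{l:main_lemma}. As $X$ is compact Hausdorff, Theorem~\ref{t:yosida}(c) applied with $Y=X$ then furnishes a homeomorphism $X\to\Max{V}$; so any $X$ as in (ii) is homeomorphic to $\Max{V}$, and is therefore determined up to homeomorphism.

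I do not expect any single step to be a serious obstacle, because the substantive argument has been isolated in Lemma~\ref{l:main_lemma}; the theorem is essentially an assembly. The only points that will require care are checking that the Yosida image $\widehat V$ really meets every hypothesis of Lemma~\ref{l:main_lemma}---in particular that it is hyperarchimedean and contains $1_X$---and, for uniqueness, recording that $\LC(X)$ separates the points of a Boolean space so that the uniqueness clause~\ref{t:yosida}(c) of the Yosida theorem can be brought to bear.
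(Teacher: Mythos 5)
Your proposal is correct and follows essentially the same route as the paper: both directions are assembled from Lemma~\ref{l:lc_implies_ha}, Corollary~\ref{c:ha_implies_bool}, the Yosida representation, and Lemma~\ref{l:main_lemma}, exactly as in the paper's proof. The only cosmetic difference is in the uniqueness clause, where you apply Theorem~\ref{t:yosida}(c) directly to the composite $(V,u)\to\LC{(X)}\hookrightarrow\C{(X)}$, while the paper reduces to showing that $\LC{(X)}\cong\LC{(Y)}$ forces $X\cong Y$ via their maximal spectra; the two arguments are interchangeable.
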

\begin{proof}(i $\Rightarrow$ ii) Set $X = \Max{V}$, and let $\widehat{\cdot} \,\,\colon V \hookrightarrow \C{(X)}$ be the unital embedding granted
by  Theorem \ref{t:yosida}. In particular, $\widehat{V}$ separates the points of $X$. Since $V$ is hyperarchimedean, $X$ is Boolean by Corollary \ref{c:ha_implies_bool}. Now $\widehat{V}=\LC{(X)}$ by Lemma \ref{l:main_lemma}.

\smallskip \noindent (ii $\Rightarrow$ i) By Lemma \ref{l:lc_implies_ha}, $\LC{(X)}$ is a hyperarchimedean vector lattice, and therefore
so is its isomorphic copy $V$.

\smallskip \noindent To prove the last two assertions, first observe that $X=\Max{V}$ in the proof of (i $\Rightarrow$ ii) above. Hence it suffices to show that if $X$ and $Y$ are Boolean spaces such that $\LC{(X)}$ and $\LC{(Y)}$ are isomorphic in $\uVL$, then $X$ is homeomorphic to $Y$. Indeed, by (iii $\Rightarrow$ i) in Lemma \ref{l:main_lemma} we know that  $\LC{(X)}$ and $\LC{(Y)}$ separate the points of $X$ and $Y$, respectively. By Theorem \ref{t:yosida} it follows that $X$ and $Y$ are homeomorphic to $\Max{\LC{(X)}}$ and $\Max{\LC{(Y)}}$,
respectively. But the latter two spaces are in turn homeomorphic because of our assumption that $\LC{(X)}$ and $\LC{(Y)}$ be isomorphic in $\uVL$. This completes the proof.
\end{proof}

\section{The functors $\B$ and $\H$}\label{s:functors}
We define functors  $\B\colon\uHA \to \BA$ and $\H\colon \BA \to \uHA$, and indicate how they  relate  to Stone duality.  We begin with $\H$.

\subsection{The Specker group of a Boolean algebra}
Let $B$ be a Boolean algebra, with greatest and least elements $\top$ and $\bot$, respectively. We say elements $b_{1},b_{2} \in B$ are \emph{disjoint} if $a\wedge b=\bot$.
By a \emph{partition} of $B$ we mean a maximal pairwise disjoint subset $P \subseteq B \smallsetminus \{\bot\}$, 
and we denote the set of finite partitions of $B$ by $F$. Partitions are ordered by refinement: 
$P_1 \leq P_2$ if for every $b_1 \in P_1$ there exists $b_2 \in P_2$ such that $b_1 \leq b_2$. 
The \emph{common refinement} of partitions  $P_1$ and  $P_2$ is 
$$
P_1 \wedge P_2 := \{b_1 \wedge b_2 \,\mid\, b_i \in P_i,\ b_1 \wedge b_2 \neq \bot \},
$$
which can readily be seen to be a partition which is finite if both $P_1$ and $P_2$ are.

The set $Q := \bigcup_{F} \R^{P}$, comprised of all maps  $v \colon P \to \R$ with $P \in F$, can be endowed with any one 
of the binary vector lattice operations $\diamond \in \{+, \wedge, \vee\}$,  as follows.  
Suppose  $v_i \in Q$, say $v_i \colon P_i \to \R$, and let $P = P_1 \wedge P_2$.  
Then $(v_1 \diamond v_2) \colon P \to \R$ is defined by declaring 
\[
(v_1 \diamond v_2)(b_1 \wedge b_2) := v_1 (b_1) \diamond v_2 (b_2).
\]
We further equip $Q$ in the obvious manner with the unary operations of scalar multiplication and with the constant $0:=z\colon \{\top\}\to \R$, where $\{\top\}$ is the unique singleton partition of $B$, and $z$ is the function given by
$z(\top)=0$.
Finally, we identify two elements $v_i \in Q$ if they agree on the common refinement of their domains, 
writing $v_1 \sim v_2$, and denoting the equivalence class of $v \in Q$ by $[v]$.  
This equivalence relation respects the operations on $Q$, and renders the quotient set
\[
\H{(B)} := Q/\sim = \{ [v] \mid v \in Q \}
\]
a vector lattice with strong unit $u = [v]$, where $v \colon \{\top\} \to \R$ with $v(\top)=1$.
 We suppress mention of the equivalence relation $\sim$ for the most part, 
abusing the terminology to the extent of saying things like ``let $v \colon P \to \R$ be a member of $\H{(B)}$''. The construct $\H{(B)}$ is known in the literature as the 
\emph{Specker group of $B$}. 
  
The operator $\H$ is functorial. If  $f \colon  B \to A$ is a Boolean homomorphism and $P$ is a finite partition of $B$ then 
\[
f(P) = \{f(b) \mid f(b) \neq \bot \}
\] 
is a finite partition of $A$.  Thus $f$ induces a $\uHA$-morphism 
$g := \H{( f)} \colon \H{(B)} \to \H{(A)}$ as follows.  Let $v \in \H{(B)}$, say  $v \colon P \to \R$ for some partition $P$ of $B$.  
Define $g(v) \colon f(P) \to \R$ by the rule 
\[
g(v)(f(b)) := v(b), \qquad b \in P.
\] 
It is straightforward to verify that $g$ is, indeed, a $\uHA$-morphism, and that $\H$ is thereby a functor $\BA \to \uHA$.

The functor $\H$ can be understood in terms of Stone duality, inasmuch as  $\H{(B)}$ is isomorphic to 
$ \LC{(\St{(B)})}$, where $\St{(B)}$ designates the Stone space of $B$. 
 Put differently, $\Max{\H{(B)}}$ is homeomorphic to $\St{(B)}$.

\subsection{The algebra of Boolean elements of a unital vector lattice}\label{ss:B}
 Let $\B{(V)}$ represent the set of Boolean elements of $(V,u)$, 
regarded as a lattice in the lattice operations inherited from $V$. (See Subsection \ref{ss:ha}). Actually, $\B{(V)}$ is a Boolean algebra 
with least element $\bot = 0$, greatest element $\top = u$, and each $v \in \B{(V)}$ complemented by $u - v$. 
We refer to $\B{(V)}$ as the \emph{algebra of Boolean elements of $V$}. 

The operator $\B$ is also functorial, since any $\uHA$ morphism $g \colon V \to W$ takes Boolean elements of $V$ to Boolean elements of $W$, so that its restriction $f := \B{(g)} \colon \B{(V)} \to \B{(W)}$ becomes a $\BA$-morphism. It is straightforward to verify that $\B$ is, indeed, a functor.  

The functor $\B$ is tied in with Stone duality, for $\B{(V)}$ is isomorphic to the Boolean algebra  of clopen subsets of $\Max{V}$.  Put differently, $\Max{V}$ is homeomorpic to $\St{(\B{(V)})}$. 

\section{The main result}\label{s:main}
We summarize our results in the Main Theorem. Its proof involves checking some routine details, and we leave those to the reader. The key point is to exhibit natural isomorphisms between the composite functors $\B{\H}$ and $\H{\B}$, and the identity functors on $\BA$ and $\uHA$, respectively.   The technical results necessary to carry this out are listed as lemmas following the proof itself.
 
\begin{theorem_main}The functors $\B\colon\uHA \to \BA$ and $\H\colon \BA \to \uHA$ are an  equivalence of categories.
\end{theorem_main}
\begin{proof}
We begin by showing that, for any Boolean algebra $B$, $\B{\H{(B)}}$ is isomorphic to $B$.  For that purpose fix $B$, and consider the map $\B{\H{(B)}} \to B$ defined by
\[
 v \longmapsto \begin{cases} v^{-1}(1) & \bot \neq v \in \B{\H{(B)}}\\
                                     \bot          &  \bot =v \in \B{\H{(B)}}
                  \end{cases}
\]            
This map is well-defined because, by Lemma \ref{l:uncomp}, the fact that $v$ is a Boolean element of $\H (B)$ 
implies that its range is contained in $\{0,1\}$, so that the domain of $v$ may be taken to be of the form 
$\{ v^{-1}(0), v^{-1}(1)\}$.  It is then straightforward to show that this map is a Boolean isomorphism. 

Now let us demonstrate that $\H{\B{(V)}}$ is isomorphic to $V$, for any unital hyperarchimedean vector lattice $(V,u)$. For that purpose fix $V$, 
and consider the map $\H{\B{(V)}} \to V$ defined by 
$$
v  \mapsto \Sigma_P v(a)a,  \qquad\H{\B{(V)}}\ni v \colon P \to \R
$$
This map is well-defined by Lemma \ref{l:lincomb_Bool}. It makes sense because the partition 
$P$ of $\B{(V)}$ is composed of Boolean elements of $V$, 
meaning that the calculation of the linear combination $\Sigma_P v(a)a$ can be performed in $V$. 
A straightforward verification shows this map to be a unital vector lattice isomorphism. 

To complete the proof it suffices to show that the two isomorphisms above are natural. This routine verification is omitted. 
\end{proof}
\begin{remark}\label{rem:bezhanishvilietal}Some of the results proved in the recent paper \cite{bezhanishvilietal} should be compared to ours.  In \cite[Section 5]{bezhanishvilietal} the authors introduce the notion of \emph{Specker $\R$-algebra}, that is to say, an $\R$-algebra which is generated (as an $\R$-algebra) by its collection of idempotent elements. This terminology is motivated by the close connection between Specker $\R$-algebras, and Specker and hyperarchimedean lattice-ordered groups, cf.\ e.g.\ \cite[Remark 6.3]{bezhanishvilietal}. In \cite[Theorem 6.8]{bezhanishvilietal} the authors prove that the category of Specker $\R$-algebras, with the $\R$-algebra homomorphisms as morphisms, is equivalent to the category of Boolean algebras. In the ring-theoretic context, this result is thus analogous to the preceding theorem.
\end{remark}
\begin{lemma}\label{l:uncomp}Let $v \colon P \to \R$ be a member of $\H{(B)}$.  Then $v$ is a Boolean element $\H{(B)}$ if, and only if, $v(P) \subseteq \{0,1\}$.
\end{lemma}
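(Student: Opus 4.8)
The plan is to reduce both implications to elementary pointwise computations carried out on a single common partition, using the explicit descriptions of the operations on $\H{(B)}$; throughout I use the characterization of a Boolean element as an element $v$ with $0\leq v\leq u$ and $v\wedge(u-v)=0$. First I would record a well-posedness observation, since the statement refers to $v(P)$ and we work with $\sim$-classes: refining the partition on which a representative $v$ is defined leaves its range unchanged, because each block $b\in P$ splits into blocks on which the refined function merely repeats the value $v(b)$. Combined with the fact that $v_1\sim v_2$ means $v_1$ and $v_2$ agree on the common refinement of their domains, this shows that the range $v(P)\subseteq\R$ depends only on the class of $v$, so that the condition $v(P)\subseteq\{0,1\}$ is meaningful for a member of $\H{(B)}$.

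Next I would unwind the relevant operations. Since $\{\top\}$ is the singleton partition and $\top\wedge b=b$ for every $b\in P$, the common refinement $\{\top\}\wedge P$ equals $P$ itself; likewise $P\wedge P=P$, because $P$ is a partition. Consequently, for $v\colon P\to\R$, everything can be computed blockwise on $P$: the unit satisfies $u(b)=1$, the difference satisfies $(u-v)(b)=1-v(b)$, the meet satisfies $(v\wedge(u-v))(b)=\min\{v(b),1-v(b)\}$, and the zero is the class of the constant function $0$ on $P$. In the same way, $0\leq v\leq u$ translates, via $v\vee u=u$ and $v\vee 0=v$ evaluated on $P$, into $0\leq v(b)\leq 1$ for every $b\in P$.

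For the implication from right to left, assume $v(P)\subseteq\{0,1\}$. Then $0\leq v(b)\leq 1$ for each $b$, so $0\leq v\leq u$; and for each $b$ we have $\min\{v(b),1-v(b)\}=0$, since $v(b)$ is $0$ or $1$. Hence $v\wedge(u-v)=0$ and $v$ is a Boolean element. For the converse, assume $v$ is Boolean, so $0\leq v\leq u$ and $v\wedge(u-v)=0$. The first condition gives $0\leq v(b)\leq 1$, so that both $v(b)$ and $1-v(b)$ are nonnegative; the second gives $\min\{v(b),1-v(b)\}=0$, which for nonnegative numbers forces $v(b)=0$ or $1-v(b)=0$, that is $v(b)\in\{0,1\}$. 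As this holds for every $b\in P$, we conclude $v(P)\subseteq\{0,1\}$.

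I expect no genuine obstacle here. The only point demanding care is the bookkeeping with $\sim$-classes and common refinements, namely justifying that the order, the difference, and the meet all collapse to the claimed blockwise formulas on $P$. Once the two identities $\{\top\}\wedge P=P$ and $P\wedge P=P$ are in hand, the entire argument reduces to the trivial real-number fact that, for $0\leq t\leq 1$, one has $\min\{t,1-t\}=0$ if and only if $t\in\{0,1\}$.
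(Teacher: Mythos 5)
Your proof is correct and follows essentially the same route as the paper's: both reduce to blockwise computation on a common partition, where $u$ takes the value $1$ on every block, and then invoke the elementary fact that a real $t\in[0,1]$ satisfies $\min\{t,1-t\}=0$ exactly when $t\in\{0,1\}$. The only addition is your explicit well-posedness check that the range is invariant under refinement, which the paper leaves tacit.
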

\begin{proof}
Suppose $v$ is a Boolean element of $\H{(B)}$ with complement $u - v$, and suppose 
$P = \{b_1 , b_2 , \dots ,b_n\}$ is the common refinement of the domains of $v$ and $u - v$. 
The point is that $v$ and $u - v$ may be regarded as having the same domain. 
Since $u(b_i) = 1$ for each $i$, $v(b_i)$ and $u(b_i) - v (b_i)$ are real numbers between 
$0$ and $1$ which meet to $0$ and add to $1$. It follows that both of them lie in $\{0,1\}$.

Now suppose that the range of $v \in \H{(B)}$ is contained in $\{0,1\}$, and let $P = \{b_1 , b_2 , \dots ,b_n\}$ 
be the common refinement of the domains of $v$ and $u - v$ as before.  
Since $u(b_i) = 1$ for all $i$'s, $v(b_i) = 0$ if, and only if, $u(b_i) -v(b_i) = 1$. 
It follows that $v \wedge (u - v) = 0$, i.e., that $v$ is a Boolean element of $\H{(B)}$. 
\end{proof} 
\begin{lemma}\label{l:lincomb_Bool}
Every element  $v \geq 0$ of a hyperarchimedean unital vector lattice $(V,u)$ can be uniquely expressed as a 
linear combination of Boolean elements.
\end{lemma}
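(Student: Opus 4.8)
The plan is to transport the problem through the representation theorem of Section~\ref{s:representation} and then read off both existence and uniqueness from Lemma~\ref{l:rep_clopens}. First I would invoke Theorem~\ref{t:main_rep_them}: since $(V,u)$ is unital hyperarchimedean, there is an isomorphism $(V,u)\cong(\LC{(X)},1_{X})$ in $\uVL$, where $X=\Max{V}$ is a Boolean space, and the distinguished unit $u$ corresponds to $1_{X}$. Because such an isomorphism preserves the order, the linear and lattice structure, and the unit, it carries Boolean elements to Boolean elements and linear combinations to linear combinations; hence it suffices to prove the statement for $V=\LC{(X)}$ with $u=1_{X}$.

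The key preliminary step is to identify the Boolean elements of $\LC{(X)}$ with the characteristic functions $\K{(X)}$. Recall from Subsection~\ref{ss:ha} that $v$ is a Boolean element precisely when $0\le v\le 1_{X}$ and $v\wedge(1_{X}-v)=0$. Evaluating at a point $x\in X$, the second condition reads $\min\{v(x),\,1-v(x)\}=0$, which together with $0\le v(x)\le 1$ forces $v(x)\in\{0,1\}$; thus $v\in\K{(X)}$, and conversely every characteristic function is plainly a Boolean element. Consequently ``linear combination of Boolean elements'' translates verbatim into ``linear combination of characteristic functions on $X$''.

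Existence and uniqueness then follow from Lemma~\ref{l:rep_clopens}. Given $v\ge 0$, the corresponding function lies in $\LC{(X)}$ and so has finite range by the equivalence (i)$\Leftrightarrow$(ii); part (iv) supplies a decomposition $v=r_{1}\chi_{1}+\cdots+r_{k}\chi_{k}$ into characteristic functions with $k$ minimal, unique to within re-indexing. Pulling this back along the isomorphism yields the asserted expression of $v$ as a linear combination of Boolean elements. Since $v\ge 0$, the nonzero values of $v$, and hence the coefficients $r_{i}$, are positive.

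The one point requiring care — and which I regard as the crux — is the precise reading of ``uniquely''. Without qualification the assertion is false, since for any Boolean element $e$ one has $u=1\cdot u=1\cdot e+1\cdot(u-e)$; the uniqueness meant is that of the \emph{minimal} such combination, equivalently the combination with distinct nonzero coefficients and pairwise disjoint Boolean summands. This is exactly the uniqueness delivered by Lemma~\ref{l:rep_clopens}(iv), in which the $\chi_{i}$ are forced to be the indicators of the nonzero level sets $v^{-1}(r_{i})$ and are therefore pairwise disjoint. Beyond this bookkeeping about what is being claimed, no genuinely new difficulty arises.
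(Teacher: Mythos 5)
Your proposal is correct and follows essentially the same route as the paper: reduce via Theorem \ref{t:main_rep_them} to $\LC{(X)}$, identify Boolean elements with characteristic functions, and read existence and uniqueness off Lemma \ref{l:rep_clopens}(iv). Your explicit caveat about what ``uniquely'' must mean (minimality of the decomposition) is exactly the qualification the paper itself makes in the last sentence of its proof.
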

\begin{proof}
By Theorem \ref{t:main_rep_them}, $V$ is isomorphic to $\LC{(X)}$ for some Boolean space $X$. 
By Lemma \ref{l:rep_clopens}, each $v \in V$ is a linear combination of characteristic functions.  
But the characteristic functions are precisely the Boolean elements of $\LC{(X)}$.  The linear combination is unique 
in the sense that any two linear combinations of minimum length expressing the same element are identical up to rearrangement.
\end{proof}
\section{Applications}\label{s:applications}
\subsection{Unital hyperarchimedean vector lattices are $a$-closed}\label{ss:a-closed}  As an instance of a notion that applies more generally,
we say that a monomorphism  $\iota\colon (V,u)\hookrightarrow (V',u')$ in $\uVL$ is  an 
\emph{$a$-extension of $(V,u)$} if the induced dual continuous map of spectral spaces 
$\iota^{*}\colon \Spec{V'}\to\Spec{V}$ is a bijection.  Since $\iota$ is one-one, 
by Lemma \ref{l:spectralmap} this is equivalent to asking that $\iota^{*}$ be one-one. 
If each $a$-extension of $(V,u)$ is an isomorphism, we say that $(V,u)$ is 
\emph{$a$-closed} (in $\uVL$). Informally, this means that no  vector lattice with strong unit $u$
that properly extends $(V,u)$ can have the same spectrum as $(V,u)$. For more on $a$-extensions, please see \cite[and references therein]{hagerkimber}.

To show that each unital hyperarchimedean vector lattice  is $a$-closed in $\uVL$, let us first record an easy observation.
\begin{lemma}\label{l:easy}Let $(V,u)$ be a unital hyperarchimedean vector lattice. If $\iota\colon (V,u)\hookrightarrow (V',u')$  is  an $a$-extension of $(V,u)$ in $\uVL$, then  $V'$  is  hyperarchimedean, too.
\end{lemma}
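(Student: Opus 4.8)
The plan is to prove that $V'$ is hyperarchimedean by verifying condition (iii) of Lemma \ref{l:ha_char}, namely that every prime ideal of $V'$ is maximal, i.e.\ that $\Spec{V'}=\Max{V'}$. The two ingredients I intend to combine are the hypothesis that $\iota^{*}\colon\Spec{V'}\to\Spec{V}$ is a bijection (this is precisely the definition of an $a$-extension, and in particular $\iota^{*}$ is injective), and the fact that, since $(V,u)$ is hyperarchimedean, Lemma \ref{l:ha_char} already gives $\Spec{V}=\Max{V}$. Thus every prime of $V$ that arises as $\iota^{*}(\mathfrak{p})$ is automatically maximal, and it is this rigidity on the base that I will transport back up through the injective map $\iota^{*}$.

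First I would fix an arbitrary prime ideal $\mathfrak{q}\in\Spec{V'}$ and, by a routine application of Zorn's Lemma (a prime is a proper ideal, hence omits the unit $u'$, and so can be enlarged to an ideal inclusion-maximal among those omitting $u'$), find a maximal ideal $\mathfrak{m}'$ of $V'$ with $\mathfrak{q}\subseteq\mathfrak{m}'$. Because $\iota^{*}(\mathfrak{p})=\iota^{-1}(\mathfrak{p})$ preserves inclusions, this yields $\iota^{*}(\mathfrak{q})\subseteq\iota^{*}(\mathfrak{m}')$ inside $\Spec{V}$. Now both of these ideals lie in $\Spec{V}=\Max{V}$ and are therefore maximal in $V$: indeed $\iota^{*}(\mathfrak{q})$ is maximal merely because it belongs to $\Spec{V}=\Max{V}$, while $\iota^{*}(\mathfrak{m}')$ is maximal for the same reason (or directly by Lemma \ref{l:spectralmap}(b)). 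Since one maximal ideal contained in another forces equality, I obtain $\iota^{*}(\mathfrak{q})=\iota^{*}(\mathfrak{m}')$. The injectivity of $\iota^{*}$ then gives $\mathfrak{q}=\mathfrak{m}'$, so $\mathfrak{q}$ is maximal. As $\mathfrak{q}$ was arbitrary, $\Spec{V'}=\Max{V'}$, and Lemma \ref{l:ha_char} delivers the conclusion that $V'$ is hyperarchimedean.

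I do not anticipate any serious obstacle; the one point requiring a word of care is the passage from $\mathfrak{q}$ to a maximal ideal $\mathfrak{m}'$ above it, which rests on the equivalence recorded in Section \ref{s:pre} between maximal ideals and ideals inclusion-maximal among the proper ones, together with the observation that a maximal ideal has totally ordered quotient and is hence prime, so that it is a genuine point of $\Spec{V'}$ on which $\iota^{*}$ may act. As a cosmetic alternative avoiding Zorn entirely, one could argue topologically: $\iota^{*}$ is a continuous bijection from the compact space $\Spec{V'}$ (spectral by Lemma \ref{l:spectop}) onto the Hausdorff space $\Max{V}$ (Boolean by Corollary \ref{c:ha_implies_bool}), hence a homeomorphism, so $\Spec{V'}$ is itself Hausdorff; since the closure of a point $\mathfrak{p}$ in the spectral topology is exactly the set of primes containing $\mathfrak{p}$, Hausdorffness forces each prime to be inclusion-maximal among primes, hence maximal, again yielding $\Spec{V'}=\Max{V'}$.
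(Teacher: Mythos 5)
Your proof is correct and is essentially the paper's argument: both hinge on the injectivity of \(\iota^{*}\) together with the equality \(\Spec{V}=\Max{V}\) from Lemma \ref{l:ha_char}, forcing any prime of \(V'\) to coincide with a maximal ideal lying over the same point. The only (immaterial) difference is that you produce the competing maximal ideal by enlarging \(\mathfrak{q}\) via Zorn's Lemma inside \(V'\), whereas the paper obtains it from the surjectivity of the restriction \(\Max{V'}\to\Max{V}\) recorded in Lemma \ref{l:spectralmap}(c).
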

\begin{proof}For let $\iota^{*}\colon \Spec{V'}\to\Spec{V}$ be the induced dual continuous map of 
spectral spaces. By Lemma \ref{l:spectralmap}, $\iota^{*}$ restricts to a continuous surjection 
from $\Max{V'}$ onto $\Max{V}$. But $\Max (V) = \Spec{V}$ by Lemma \ref{l:ha_char},  
so that if $V'$ has a non-maximal prime $\mathfrak{p}\in\Spec{V'}\setminus\Max{V'}$, 
$\iota^{*}$ cannot be one-one.
\end{proof}
We can now prove:
\begin{corollary}\label{c:a-closed}Any unital hyperarchimedean vector lattice  is $a$-closed in $\uVL$.
\end{corollary}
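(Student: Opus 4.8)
The plan is to combine Lemma \ref{l:easy} with the representation Theorem \ref{t:main_rep_them}, thereby reducing the statement to the topological fact that pre-composition with a homeomorphism of Boolean spaces is a bijection between the corresponding vector lattices of locally constant functions. First I would fix an $a$-extension $\iota\colon (V,u)\hookrightarrow(V',u')$, so that by definition $\iota^{*}\colon\Spec{V'}\to\Spec{V}$ is a bijection. By Lemma \ref{l:easy} the vector lattice $V'$ is again hyperarchimedean, so both $V$ and $V'$ fall under Lemma \ref{l:ha_char}, giving $\Spec{V}=\Max{V}$ and $\Spec{V'}=\Max{V'}$. Hence $\iota^{*}$ is a continuous bijection $\Max{V'}\to\Max{V}$ between compact Hausdorff spaces (Corollary \ref{c:ha_implies_bool}), and therefore a homeomorphism, which I denote $\phi$.

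Next I would use Theorem \ref{t:main_rep_them} to identify $(V,u)$ with $\LC{(X)}$ and $(V',u')$ with $\LC{(X')}$, where $X=\Max{V}$ and $X'=\Max{V'}$, the identifications being the Yosida maps $\widehat{\cdot}$. The key compatibility I need is that $\widehat{\iota(v)}=\widehat{v}\circ\phi$ for every $v\in V$. Granting this, the image $\iota(V)$ is carried by the Yosida map of $V'$ onto $\{f\circ\phi\mid f\in\LC{(X)}\}$. Since $\phi\colon X'\to X$ is a homeomorphism, a real function on $X$ is locally constant exactly when its pull-back along $\phi$ is, so pre-composition with $\phi$ is a bijection $\LC{(X)}\to\LC{(X')}$. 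Thus $\{f\circ\phi\mid f\in\LC{(X)}\}=\LC{(X')}=\widehat{V'}$, and since $\widehat{\cdot}$ is injective on $V'$ we conclude $\iota(V)=V'$. A monomorphism in $\uVL$ that is onto is an isomorphism, so $\iota$ is an isomorphism and $(V,u)$ is $a$-closed.

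The hard part will be establishing the compatibility relation $\widehat{\iota(v)}=\widehat{v}\circ\phi$ that links the algebraic map $\iota$ to its dual $\phi=\iota^{*}$; every other step is either bookkeeping or a direct appeal to a result already proved. This relation is precisely the naturality of the Yosida representation, and I would verify it pointwise: for $\mathfrak{m}'\in\Max{V'}$ with $\mathfrak{m}=\iota^{-1}(\mathfrak{m}')=\phi(\mathfrak{m}')$, the morphism $\iota$ descends to a unital embedding $V/\mathfrak{m}\hookrightarrow V'/\mathfrak{m}'$. Since both quotients are canonically $(\R,1)$ via the unique isomorphisms $\iota_{\mathfrak{m}}$, $\iota_{\mathfrak{m}'}$ of Theorem \ref{t:yosida}(a), and the only unital linear lattice homomorphism $(\R,1)\to(\R,1)$ is the identity, the evaluation of $\iota(v)$ at $\mathfrak{m}'$ agrees with that of $v$ at $\mathfrak{m}$, which is exactly the asserted identity.
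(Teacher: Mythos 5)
Your proof is correct, and it shares its opening moves with the paper's --- Lemma \ref{l:easy} to make $V'$ hyperarchimedean, then Theorem \ref{t:main_rep_them} --- but the decisive step is genuinely different. The paper identifies only $V'$ with $\LC{(\Max{V'})}$ and uses just the \emph{injectivity} of $\iota^{*}$ on maximal ideals to conclude that $\iota(V)$ separates the points of $\Max{V'}$; the implication (i)$\Rightarrow$(iii) of Lemma \ref{l:main_lemma} then forces $\iota(V)=\LC{(\Max{V'})}=V'$ in one stroke. You instead use the full bijectivity of $\iota^{*}$ to obtain a homeomorphism $\phi\colon\Max{V'}\to\Max{V}$ of compact Hausdorff spaces, apply Theorem \ref{t:main_rep_them} to both $V$ and $V'$, and establish the naturality identity $\widehat{\iota(v)}=\widehat{v}\circ\phi$ from the uniqueness clause of Theorem \ref{t:yosida}(a); surjectivity of $\iota$ then follows because pre-composition with a homeomorphism carries $\LC{(\Max{V})}$ onto $\LC{(\Max{V'})}$. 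Your verification of the naturality relation is sound (the composite $V/\mathfrak{m}\to V'/\mathfrak{m}'\to\R$ is a unital morphism into $(\R,1)$, hence equals $\iota_{\mathfrak{m}}$ by uniqueness), and it records a useful general fact about the Yosida functor; what it buys you is independence from Lemma \ref{l:main_lemma}. What the paper's route buys is economy --- Lemma \ref{l:main_lemma} already packages exactly the rigidity statement needed --- and a visibly weaker input, since by Lemma \ref{l:spectralmap} one-one-ness of $\iota^{*}$ is the only substantive content of being an $a$-extension once $\iota$ is one-one.
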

\begin{proof}Let $\iota\colon (V,u)\hookrightarrow (V',u')$  be  an $a$-extension of $(V,u)$ in $\uVL$. By Lemma \ref{l:easy} we deduce that $V'$ is hyperarchimedean. By Theorem \ref{t:main_rep_them} we may  identify $(V',u')$ with $\LC{(X)}$, for $X=\Max{V'}=\Spec{V'}$. Now observe that $\iota(V)$ must  separate the points of $X$: for if  $\mathfrak{a}\neq \mathfrak{b} \in X$ are not separated by $\iota(V)$, then the  induced dual continuous map $\iota^{*}\colon \Max{V'}\to\Max{V}$ satisfies $\iota^{*}(\mathfrak{a})=\iota^{*}(\mathfrak{b})$ by direct inspection of its definition (\ref{e:spectralmap}). But if $\iota(V)$ is separating, Lemma \ref{l:main_lemma} implies $\iota(V)=\LC{(X)}=V'$, so that $\iota$ is onto.
\end{proof}

\subsection{Free objects, and a theorem \`a la Tarski}\label{ss:free}We write $\Set$ to denote the category of 
sets and functions. Let $\U\colon \uHA \to \Set$ be the ``forgetful'' assignment obtained by composing the functor $\B\colon \uHA\to \BA$ of Subsection \ref{ss:B} with the underlying set functor 
$\U'\colon\BA\to\Set$ that takes a Boolean algebra to its carrier set. %
\begin{corollary}\label{c:free} The functor $\U\colon \uHA \to \Set$ has a left adjoint $\F\colon \Set \to \uHA$. 
If $S$ is a set, then $\F{(S)}$ is $\LC{(\{0,1\}^{|S|})}$ up to an isomorphism in $\uHA$, 
where $\{0,1\}^{|S|}$ denotes the product of $|S|$ copies of the discrete space $\{0,1\}$, 
endowed with the product topology. Identifying each element $s\in S$ with its characteristic function 
$\chi_{s}\colon S \to \{0,1\}$, we obtain an injection $\iota \colon S \to \F{(S)}$. 
\end{corollary}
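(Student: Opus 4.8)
The plan is to obtain $\F$ by transporting the classical free-Boolean-algebra adjunction across the equivalence of the Main Theorem, and then to read off the explicit description from Stone duality. Recall first that, by its very definition, $\U=\U'\circ\B$, where $\B\colon\uHA\to\BA$ is the functor of Subsection \ref{ss:B} and $\U'\colon\BA\to\Set$ is the underlying-set functor. It is classical that $\U'$ has a left adjoint, the free Boolean algebra functor $\Phi\colon\Set\to\BA$, whose value $\Phi{(S)}$ is the free Boolean algebra on the set of generators $S$; I take this as known.

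By the Main Theorem, $\B$ is one half of an equivalence of categories whose quasi-inverse is $\H$. Since any equivalence can be refined to an adjoint equivalence, $\H$ is in particular a left adjoint of $\B$. As left adjoints compose, $\F:=\H\circ\Phi\colon\Set\to\uHA$ is a left adjoint of $\U'\circ\B=\U$; this already settles the existence claim. The unit $\eta_{S}\colon S\to\U\F{(S)}$ is then the composite of the unit $S\to\U'\Phi{(S)}$ of the free-Boolean-algebra adjunction with the natural isomorphism $\B\H\cong\mathrm{id}_{\BA}$ furnished by the Main Theorem.

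Next I would compute $\F{(S)}=\H{(\Phi{(S)})}$ concretely. By Stone duality the Stone space $\St{(\Phi{(S)})}$ of the free Boolean algebra on $S$ is the Cantor cube $\{0,1\}^{|S|}$, since $\Phi{(S)}$ is isomorphic to the algebra of clopen subsets of that product of discrete two-point spaces. The description of $\H$ recorded in Section \ref{s:functors}, namely $\H{(B)}\cong\LC{(\St{(B)})}$, then yields $\F{(S)}=\H{(\Phi{(S)})}\cong\LC{(\{0,1\}^{|S|})}$, as asserted. To identify the injection $\iota$, I track $\eta_{S}$ through these isomorphisms: the $s$-th generator of $\Phi{(S)}$ corresponds to the clopen set $\{x\in\{0,1\}^{|S|}\mid x_{s}=1\}$, whose characteristic function is the $s$-th coordinate projection, and this is precisely the Boolean element $\chi_{s}\in\B{(\F{(S)})}\subseteq\F{(S)}$ appearing in the statement. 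Injectivity of $\iota$ is inherited from injectivity of the free-Boolean-algebra unit: distinct coordinates separate some point of the cube and hence give distinct projections, and this property is preserved by the equivalence.

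The main obstacle is conceptual rather than computational: one must verify carefully that an equivalence genuinely provides adjunctions in both directions, so that $\H$ may legitimately serve as a left adjoint of $\B$, and that the two left adjoints $\H$ and $\Phi$ compose in the correct order to give a left adjoint of $\U=\U'\circ\B$. Once this is secured, the explicit identifications of $\F{(S)}$ and of $\iota$ reduce to a routine unwinding of Stone duality together with the already-established formula $\H{(B)}\cong\LC{(\St{(B)})}$.
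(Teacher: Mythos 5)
Your proposal is correct and follows essentially the same route as the paper: the paper likewise obtains $\F$ by composing the free Boolean algebra functor $\Set\to\BA$ with $\H$, which is a left adjoint of $\B$ by the Main Theorem, and then reads off the explicit description of $\F{(S)}$ and $\iota$ from Stone duality and the identification $\H{(B)}\cong\LC{(\St{(B)})}$. The only difference is that the paper leaves the final unwinding to ``direct inspection,'' whereas you spell it out.
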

\begin{proof}As for all varieties of algebras, the underlying set functor $\U'\colon\BA\to \Set$ has a left adjoint $\F'\colon\Set \to \B$ that takes a given set  to the free Boolean algebra generated by that set.
Also, $\B$ has a left adjoint, namely $\H$, by our main theorem. Since adjoints compose, $\F:=\H\F'$ is left adjoint to $\U=\U'\B$. The rest follows from direct inspection of the definitions of the functors involved.
\end{proof}
\begin{remark}We continue to adopt the notation of the preceding corollary in this remark. As an instance of the general fact that adjointness relations can be expressed by universal arrows \cite[Theorem IV.1.2]{maclane}, we stress that for each set $S$, the object $\F{(S)}$ is uniquely determined to within a unique isomorphism in $\uHA$ 
by the following universal property of $\iota$. For each object $(V,u)$ of $\uHA$ and each function 
$f \colon S \to \B{(V)}$, there is a unique unital homomorphism of vector lattices 
$f' \colon \F{(S)}\to (V,u)$ such that $f=f'\circ\iota$. 
\end{remark}

In view of Corollary \ref{c:free}, for each cardinal $\kappa$ we write $\LC_{\kappa}$ to denote  $\LC{(\{0,1\}^{|S|})}$, where $S$ is any set of cardinality $\kappa$; and we call  $\LC_{\kappa}$ the \emph{free unital hyperarchi\-me\-dean vector lattice} on $\kappa$ generators. 

\smallskip 
If $\kappa$ is a finite integer it is easy to characterise $\LC_{\kappa}$ abstractly in $\uHA$ without using a universal property. 
\begin{corollary}\label{c:finitefree} Fix an integer $n\geq 0$. The free unital hyperarchimedean vector lattice on $n$ generators is, up to isomorphism, the unique object $(V,u)$ in $\uHA$  such that $V$ has dimension $n$ as a real vector space.
\end{corollary}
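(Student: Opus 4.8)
The plan is to derive the corollary from the Main Theorem (the equivalence between $\uHA$ and $\BA$ given by $\B$ and $\H$) together with the representation of Theorem~\ref{t:main_rep_them}. I would proceed in two stages: first classify the finite-dimensional objects of $\uHA$, and then locate the free object on $n$ generators among them.

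For the classification I would show that, up to isomorphism, $\R^n$---equivalently $\LC(D_n)$, where $D_n$ denotes the $n$-point discrete space equipped with the unit $1_{D_n}$---is the unique object of $\uHA$ of real dimension $n$. By Theorem~\ref{t:main_rep_them} every object $(V,u)$ is isomorphic to $\LC(X)$ with $X=\Max V$ a Boolean space, so everything reduces to computing $\dim_\R \LC(X)$. The point is that this dimension is finite exactly when $X$ is finite: a finite Hausdorff space is discrete, whence $\LC(X)=\C(X)=\R^{|X|}$ has dimension $|X|$; while an infinite Boolean space carries an infinite family of pairwise disjoint nonempty clopen sets, whose characteristic functions are linearly independent, so $\LC(X)$ is then infinite-dimensional. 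Hence $\dim_\R V=n$ forces $|\Max V|=n$. Since the $n$-point discrete space is unique up to homeomorphism, any two objects of dimension $n$ have homeomorphic maximal spectra, and Theorem~\ref{t:main_rep_them} then makes them isomorphic in $\uHA$; as $\R^n=\LC(D_n)$ is itself $n$-dimensional, it is the unique such object up to isomorphism.

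It remains to identify this object with the free object on $n$ generators, and here I would pass to the Boolean side. Under the equivalence, $\R^n=\LC(D_n)$ corresponds to its algebra of Boolean elements $\B(\R^n)$, namely the Boolean algebra of clopen subsets of $D_n$, which is the power set of an $n$-element set---the unique Boolean algebra with exactly $n$ atoms. On the other hand the free object is $\F(n)=\H(\F'(n))$, with $\F'(n)$ the free Boolean algebra on $n$ generators; since $\F'(n)$ is finite, $\F(n)\cong\R^m$ where $m$ is its number of atoms, equivalently $m=|\St(\F'(n))|$. Matching the free object to $\R^n$ therefore comes down to comparing $\F'(n)$ with the $n$-atom power-set algebra. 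I expect this final reconciliation---pinning down the dimension of $\F(n)$ from the combinatorics of the free Boolean algebra $\F'(n)$ and squaring it against the dimension named in the statement---to be the main obstacle; everything preceding it is a routine application of Theorem~\ref{t:main_rep_them} and the equivalence already in hand.
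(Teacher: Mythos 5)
Your first stage is correct, and it takes a genuinely different route from the paper. You classify the $n$-dimensional objects of $\uHA$ via Theorem \ref{t:main_rep_them}: $(V,u)\cong\LC{(X)}$ with $X=\Max{V}$, and $\dim_{\R}\LC{(X)}$ equals $|X|$ when $X$ is finite (a finite Hausdorff space being discrete, so $\LC{(X)}=\C{(X)}=\R^{|X|}$), while an infinite Boolean space carries an infinite disjoint family of nonempty clopens whose characteristic functions are linearly independent. The paper instead invokes Yudin's theorem \cite[Theorem 3.21]{aliprantis}: a cone induces an Archimedean lattice order on $\R^{n}$ if and only if it is the nonnegative span of a basis, whence the coordinatewise order is, to within isomorphism, the only Archimedean lattice order on $\R^{n}$. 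Your argument is more self-contained, using only machinery already established in the paper, and it handles the unit automatically, since the isomorphism of Theorem \ref{t:main_rep_them} lives in $\uVL$; the Yudin route still needs the unstated remark that any strong unit of coordinatewise $\R^{n}$ has all coordinates strictly positive and can be rescaled to $(1,\dots,1)$.

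The obstacle you flag in your final paragraph, however, is not a routine verification you left out; it is a genuine inconsistency, and your own computation exhibits it. The free Boolean algebra $\F'(n)$ has $2^{n}$ atoms (the minterms in the $n$ generators), so $\St{(\F'(n))}=\{0,1\}^{n}$ has $2^{n}$ points and $\F{(n)}=\H{(\F'(n))}\cong\LC{(\{0,1\}^{n})}\cong\R^{2^{n}}$, of dimension $2^{n}$. With freeness as defined in Corollary \ref{c:free}, the free object on $n$ generators is therefore the unique object of dimension $2^{n}$, never of dimension $n$ (already for $n=0$ it is $\H$ of the two-element algebra, i.e.\ $\R$, of dimension $1$; for $n=1$ it is $\R^{2}$). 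So the statement as printed cannot be reconciled with your correct classification, and no completion of your sketch will achieve it. Note that the paper's own proof does not close this gap either: its opening assertion that ``$\LC_{n}$ has linear dimension $n$'' silently reads $\LC_{n}$ as $\LC$ of the $n$-point discrete space rather than as $\LC{(\{0,1\}^{n})}$, which is the official definition --- the same conflation recurs in the proof of Corollary \ref{c:cantorfree}, where $\LC{(\{x\})}$ is denoted $\LC_{1}$. The corollary becomes true, and your two stages prove it, once ``dimension $n$'' is replaced by ``dimension $2^{n}$''; equivalently, the $n$-dimensional object $\R^{n}$ you characterized is free precisely when $n$ is a power of $2$, on $\log_{2}n$ generators.
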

\begin{proof} It is clear that $\LC_{n}$ has linear dimension $n$. Conversely, suppose $V\cong \R^{n}$ as a linear space. By a theorem of Yudin, see e.g.\  \cite[Theorem 3.21]{aliprantis}, a cone on $\R^{n}$  determines an Archimedean lattice order on $\R^{n}$ if, and only if, it is a \emph{Yudin cone}, i.e.\ the non-negative span of a (Hamel) basis. Hence, to within an isomorphism of vector lattices, the only  order that makes $\R^{n}$ into an Archimedean vector lattice is the coordinatewise order inherited from $\R$. Direct inspection now shows that this in fact makes $\R^{n}$ into a hyperarchimedean vector lattice with unit $(1,\ldots,1)\in \R^{n}$, and hence into an isomorphic copy in $\uHA$
of $\LC_{n}$.
\end{proof}
When $\kappa=\aleph_{0}$, the first infinite cardinal,  $\{0,1\}^{\aleph_{0}}$ is known as the \emph{Cantor space}. 
By a well-known theorem of Brouwer, to within a homeomorphism the Cantor space is the unique  
non-empty compact totally disconnected metrisable space without isolated points; equivalently, 
by an application of Urysohn's metrisation theorem, the unique second-countable Boolean space 
that is non-empty and has no isolated points.  An algebraic reformulation of this fact via Stone duality 
amounts to Tarski's theorem that the free Boolean algebra on $\aleph_{0}$ generators is, to within isomorphism, 
the only non-trivial countable atomless Boolean algebra, see e.g.\ \cite[Chapter 28]{hg}. 

Our results afford a vector-lattice analogue of Tarski's theorem. We recall \cite[Chapter 3]{dar} that  
a \emph{principal polar} of a vector lattice $V$ is a set of the form
\[
P(v)=\{w \in V \,\mid\, |v|\wedge |w|=0\} 
\]
for some $v \in V$.  We further recall that the category $\uVL$  has products. While a concrete description 
of arbitrary products requires  care, finite products agree with their well-known counterparts 
in the non-unital category of vector lattices: the product of $(V,u)$ and $(V',u')$ may be concretely represented 
as $(V\times V', (u,u'))$, where $V\times V'$ is the set-theoretic Cartesian product endowed with 
pointwise operations. See e.g.\ \cite[Section 16]{dar}. Hence, when we say that $(V',u')$ is a 
\emph{direct factor} of $(V'',u')$ in $\uVL$   we mean that there exists an isomorphism 
$(V'',u'')\to (V,u)\times (V',u')$ in that category, for some $(V,u)$.
\begin{lemma}\label{l:direct_factors} Let $(V,u)$ be a unital hyperarchimedean vector lattice. 
\begin{enumerate}
 \item The Boolean elements of $(V,u)$ are in bijective correspondence with the principal polars of $V$ via the assignment $v\in\B{(V)} \mapsto P(v)$ that takes a Boolean element to the principal polar it generates.
 \item 
The Boolean elements of $(V,u)$  are in bijective correspondence with its direct factors, as follows. 
If $u'$ and $u''$ are complementary Boolean 
elements in $(V,u)$ then 
\[
(V,u)  \cong  (P(u''),u')\times (P(u'),u'').
\]
Conversely, in $(V',u')\times (V'',u'')$, $(u',0)$ and $(0,u'')$ are complementary Boolean elements, 
and $P((u',0)) = V''$ and $P((0,u'')) = V'$.  
\end{enumerate}
\end{lemma}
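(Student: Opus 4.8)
The plan is to reduce the entire statement to the concrete model furnished by Theorem \ref{t:main_rep_them}, identifying $(V,u)$ with $(\LC{(X)},1_X)$ for a Boolean space $X$. Under this identification the Boolean elements of $V$ are exactly the characteristic functions $\chi_C$ of clopen subsets $C\subseteq X$, and all vector-lattice operations are computed pointwise, so both parts become transparent assertions about clopen subsets of $X$.

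For part (1), I would first observe that for \emph{any} $f\in\LC{(X)}$ one has $P(f)=\{g\in\LC{(X)}\mid g\text{ vanishes on }\mathrm{supp}\,f\}$, since $|f|\wedge|g|=0$ holds pointwise precisely where one of $f,g$ vanishes. By Lemma \ref{l:rep_clopens} the set $\mathrm{supp}\,f$ is a finite union of clopen pieces, hence clopen, so $P(f)=P(\chi_{\mathrm{supp}\,f})$; this already shows that every principal polar is the polar of a Boolean element, giving surjectivity of $v\mapsto P(v)$ on $\B{(V)}$. For injectivity I would recover the clopen set from its polar through the formula $C=\{x\in X\mid g(x)=0\text{ for all }g\in P(\chi_C)\}$: points of $C$ are forced zeros of every member of $P(\chi_C)$, while for $x\notin C$ the function $\chi_{X\setminus C}$ lies in $P(\chi_C)$ and is nonzero at $x$. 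Thus distinct Boolean elements have distinct polars.

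For part (2), write $u'=\chi_C$ and $u''=\chi_D$ with $D=X\setminus C$, a clopen partition of $X$. I would check that $(P(u''),u')$ is a genuine object of $\uVL$ and that restriction $f\mapsto f|_C$ is an isomorphism $(P(u''),u')\to(\LC{(C)},1_C)$ in $\uVL$: the members of $P(u'')$ are exactly the functions vanishing on $D$, the element $u'=\chi_C$ is a \emph{strong} unit there because such functions are bounded and supported in $C$, and restriction is a linear lattice bijection sending $u'$ to $1_C$, with inverse given by extension by zero across $D$. Symmetrically $(P(u'),u'')\cong(\LC{(D)},1_D)$. Since $X=C\sqcup D$ is a topological coproduct of clopen pieces, the splitting $f\mapsto(f|_C,f|_D)$ is an isomorphism $(\LC{(X)},1_X)\cong(\LC{(C)},1_C)\times(\LC{(D)},1_D)$ in $\uVL$, using that finite products in $\uVL$ are the concrete pointwise products recorded before the statement. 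Composing these isomorphisms yields $(V,u)\cong(P(u''),u')\times(P(u'),u'')$. For the converse I would compute directly in a product $(V',u')\times(V'',u'')$: the identities $(u',0)\wedge(0,u'')=(0,0)$ and $(u',0)\vee(0,u'')=(u',u'')$ exhibit $(u',0),(0,u'')$ as complementary Boolean elements, and $P((u',0))=\{(w',w'')\mid u'\wedge|w'|=0\}=\{0\}\times V''$ because $u'$ is a weak unit of $V'$; identifying $\{0\}\times V''$ with $V''$ gives $P((u',0))=V''$, and symmetrically $P((0,u''))=V'$. Finally, since the forward isomorphism sends $u'$ to $(u',0)$, the two assignments are mutually inverse, which is exactly the asserted bijective correspondence.

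The main obstacle is bookkeeping rather than mathematics: one must confirm that each polar $P(u'')$, though only a subobject of $V$, is itself an object of $\uVL$ with the stated unit $u'$—in particular that $u'$ is \emph{strong} there—and that the restriction and splitting maps respect linearity, the lattice operations, and the units simultaneously. The single place where the weak-unit hypothesis is genuinely invoked is the converse computation $u'\wedge|w'|=0\Rightarrow w'=0$; everything else is a pointwise verification transported along Theorem \ref{t:main_rep_them}.
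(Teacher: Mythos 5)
Your proof is correct, but it takes a more uniformly concrete route than the paper's. For part (1) the paper argues purely algebraically: injectivity comes from the observation that if $v\neq v'$ are Boolean then (say) $v\wedge(u-v')$ lies in $P(v')\setminus P(v)$, and surjectivity from the identity $P(v)=P(n|v|\wedge u)$ together with Lemma \ref{l:ha_char}, which makes $n|v|\wedge u$ Boolean for large $n$ --- no representation theorem needed. You instead pass to $\LC{(X)}$ and recover the clopen set $C$ from the polar $P(\chi_C)$ by a pointwise support argument; this works (the set where $f\neq 0$ is indeed clopen by Lemma \ref{l:rep_clopens}, so your use of $\mathrm{supp}\,f$ is unambiguous), but it invokes Theorem \ref{t:main_rep_them} where a two-line computation suffices. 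For part (2) the two arguments are essentially the same decomposition in two languages: the paper writes $v=\Sigma r_iu_i$ via Lemma \ref{l:lincomb_Bool} and sends $v\mapsto(\Sigma r_i(u_i\wedge u'),\Sigma r_i(u_i\wedge u''))$, which in your model is exactly $f\mapsto(f\cdot\chi_C,f\cdot\chi_D)$ for the clopen partition $X=C\sqcup D$; since Lemma \ref{l:lincomb_Bool} is itself proved via the representation, the logical dependencies are identical. What your version buys is transparency --- the direct-factor splitting along a clopen partition is visually evident, and you correctly isolate the one genuinely algebraic step, namely that $u'\wedge|w'|=0$ forces $w'=0$ because $u'$ is a weak unit, in the converse computation. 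What it costs is that the bookkeeping you flag (that $u'$ is a \emph{strong} unit of $P(u'')$, that extension by zero lands in $\LC{(X)}$) must actually be carried out, whereas the paper's abstract map needs only the verification that $v'\in P(u'')$ and $v''\in P(u')$. Incidentally, your clopen-splitting isomorphism is exactly the one the paper spells out later, in the proof of Corollary \ref{c:cantorfree}.
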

\begin{proof}(1)\ We need to prove that the assignment $v\in \B{(V)}\mapsto P(v)$ restricted to Boolean elements is a bijection onto principal polars. If $v\neq v'$ then either $v\wedge (u-v')\neq 0$ or  $v'\wedge (u-v)\neq 0$; say the former is the case. Then $v\wedge (u-v')\in P(v')$ but $v\wedge (u-v')\not \in P(v)$. This proves injectivity. For surjectivity, given a principal polar $P(v)$ we have $P(v)=P(n|v|\wedge u)$ for each integer $n\geq 1$. It follows by Lemma \ref{l:ha_char} that each principal polar is generated by a Boolean element.

(2) \
Suppose that $u'$ and $u''$ are complementary 
Boolean elements of $(V,u)$. Consider an arbitrary $v \in V$, and write it as a linear combination 
$\Sigma r_i u_i$ of Boolean elements $u_i$, as Lemma \ref{l:lincomb_Bool} assures you can.
Then $v' = \Sigma r_i (u_i \wedge u') \in P(u'')$ and 
$v'' = \Sigma r_i (u_i \wedge u'') \in P(u')$ satisfy $v = v' + v''$ and $|v'| \wedge |v''| = 0$.
The mapping $v \mapsto (v',v'')$ provides the desired isomorphism. 
It is straightforward to check the last sentence.  
\end{proof}
\begin{corollary}\label{c:cantorfree}The free  unital hyperarchimedean vector lattice on $\aleph_{0}$ generators is, up to isomorphism, the unique object $(V,u)$ in $\uHA$  that satisfies the following conditions.
\begin{enumerate}
\item $(V,u)$ is non-trivial, i.e.\ $u\neq 0$.
\item $(V,u)$ has no direct factor isomorphic to $(\R,1)$.
\item $V$ has countably many principal polars.
\end{enumerate}
\end{corollary}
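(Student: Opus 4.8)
The plan is to transport the whole problem across the equivalence $\B\dashv\H$ of the Main Theorem and reduce it to the classical Tarski characterisation of the countable atomless Boolean algebra. Since $\B$ and $\H$ are an equivalence, every object $(V,u)$ of $\uHA$ is isomorphic to $\H(\B{(V)})$, and two objects are isomorphic precisely when their images under $\B$ are. Moreover, by Corollary \ref{c:free} the free object $\LC_{\aleph_0}=\F{(S)}$ on a countable set $S$ equals $\H\F'(S)$, so that $\B{(\LC_{\aleph_0})}\cong\F'(S)$ is the free Boolean algebra on $\aleph_0$ generators. Thus the corollary follows once I show that conditions (1)--(3) hold for $(V,u)$ exactly when $\B{(V)}$ is a non-trivial, countable, atomless Boolean algebra: for then Tarski's theorem \cite[Chapter 28]{hg} forces $\B{(V)}\cong\F'(S)$, whence $(V,u)\cong\H(\B{(V)})\cong\LC_{\aleph_0}$.

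The two outer conditions translate immediately. Condition (1), that $u\neq 0$, says exactly that the unit $\top=u$ and the zero $\bot=0$ of $\B{(V)}$ are distinct, i.e.\ that $\B{(V)}$ is non-trivial. Condition (3) is handled by Lemma \ref{l:direct_factors}(1), which exhibits a bijection between the Boolean elements of $V$ and the principal polars of $V$; hence $V$ has countably many principal polars if and only if $\B{(V)}$ is countable.

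The substantive step is the translation of condition (2), and I expect this to be where the only real work lies. By Lemma \ref{l:direct_factors}(2) the direct factors of $(V,u)$ are governed by Boolean elements: a Boolean element $e$ together with its complement $u-e$ yields $(V,u)\cong (P(u-e),e)\times(P(e),u-e)$, and every direct factor arises this way. I claim that the factor $(P(u-e),e)$ is isomorphic to $(\R,1)$ if and only if $e$ is an atom of $\B{(V)}$. Indeed, the Boolean elements of $(P(u-e),e)$ are precisely the Boolean elements of $V$ lying below $e$, so that $\B{(P(u-e),e)}$ is the principal ideal ${\downarrow}e$ of $\B{(V)}$; and by the equivalence a non-trivial object of $\uHA$ is isomorphic to $(\R,1)$ exactly when its algebra of Boolean elements is the two-element algebra, since $(\R,1)=\H(\{0,1\})$. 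Hence $(P(u-e),e)\cong(\R,1)$ iff ${\downarrow}e=\{0,e\}$, i.e.\ iff $e$ is an atom. Consequently $(V,u)$ has a direct factor isomorphic to $(\R,1)$ iff $\B{(V)}$ has an atom, so condition (2) is equivalent to $\B{(V)}$ being atomless.

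Combining the three translations, $(V,u)$ satisfies (1)--(3) exactly when $\B{(V)}$ is non-trivial, countable and atomless. Existence is then clear, since $\B{(\LC_{\aleph_0})}$ is the clopen algebra of the Cantor space, which is non-trivial, countable by second countability, and atomless because the Cantor space has no isolated points; uniqueness follows from Tarski's theorem as above. The main obstacle is the clean identification in condition (2) of the factor $(P(u-e),e)$ with the principal ideal ${\downarrow}e$ of $\B{(V)}$, together with the recognition that $(\R,1)$ is precisely the object of $\uHA$ whose algebra of Boolean elements is two-element; everything else is routine transport across the equivalence.
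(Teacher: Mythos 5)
Your proof is correct, and it takes a route dual to the paper's. The paper stays on the topological side: it invokes Theorem \ref{t:main_rep_them} to write $V$ as $\LC{(X)}$ for a Boolean space $X$, shows that conditions (1)--(3) force the clopen algebra of $X$ to be non-trivial, countable and atomless (handling condition (2) via isolated points: an atom of the clopen algebra must be a singleton $\{x\}$, and then $\LC{(\{x\})}\cong\R$ splits off as a direct factor), and concludes by Brouwer's theorem that $X$ is the Cantor space. You instead transport everything through the equivalence $\B$, $\H$ and apply Tarski's theorem to $\B{(V)}$ directly --- which is exactly the algebraic reformulation the paper itself remarks is equivalent to Brouwer's theorem via Stone duality. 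The translations of (1) and (3) are immediate in both treatments; the real content in both is condition (2), and your identification of $\B{(P(u-e),e)}$ with the principal ideal ${\downarrow}e$ of $\B{(V)}$ is a clean and correct way to do it (the verification that Boolean elements of the factor are exactly the Boolean elements of $V$ below $e$ is routine: for $0\leq w\leq e$ with $w\wedge(u-e)=0$ one has $w\wedge(u-w)\leq w\wedge(u-e)+w\wedge(e-w)$, and conversely). What your approach buys is that it never mentions the space $X$ and uses only the categorical machinery already established; what the paper's approach buys is a concrete topological picture (the Cantor space) and a shorter path from condition (2) to ``no isolated points.'' Either is a legitimate proof at the paper's level of detail.
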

\begin{proof}Let us first check that $\LC_{\aleph_{0}}$ satisfies (1--3). That (1) holds is clear. If (2) fails, then by a straightforward computation  the projection map onto the direct factor $(\R,1)$ has as kernel a maximal ideal which is an isolated point of $\Max{V}$. By Theorem \ref{t:yosida},  $\Max{V}$ is homeomorphic to  the Cantor space $\{0,1\}^{\aleph_{0}}$, and the latter has no isolated points. Hence (2) holds. As to (3), observe that the characteristic functions $\K{(\{0,1\}^{\aleph_{0}})}$ of the clopen subsets of $\{0,1\}^{\aleph_{0}}$ constitute the Boolean elements of $\LC_{\aleph_{0}}$; by Lemma \ref{l:direct_factors}, these correspond to the direct factors of $\LC_{\aleph_{0}}$, and to its principal polars. Since the basis of clopen sets of the Cantor space is countable, (3) holds.

Conversely, suppose $(V,u)$ has the properties listed.  By Theorem \ref{t:main_rep_them} we may assume that $V$ is 
$\LC{(X)}$ for a Boolean space $X$. Then $X$ is nonempty by (1). Let $B$ be the family  of clopen subsets of $X$. 
We claim that $B$ is the countable atomless Boolean algebra, from which it follows by Brouwer's theorem that $X$ is the Cantor space, and hence $(V,u)$ is the  free  unital hyperarchimedean vector lattice on $\aleph_{0}$ generators.

Now the characteristic functions $\K{(X)}$ of the clopen subsets of $X$ constitute the Boolean elements of $\LC{(X)}$,  
which correspond to its direct factors as in Lemma \ref{l:direct_factors}.  We conclude that $B$ is countable in light of (3). (The correspondence between elements of $B$ and direct factors of $\LC{(X)}$ can  also be spelled out  directly:
if $b$ and $c$ are complementary  clopen subsets of $X$, 
 the map  given by the rule $v \mapsto (v_{\upharpoonright b},v_{\upharpoonright c})$ is clearly an isomorphism $\LC{(X)} \to \LC{(b)} \times \LC{(c)}$. Here, $v_{\upharpoonright b}$ denotes the restriction of $v$ to $b$.)

Next suppose $B$ has an atom, $a$. Since $B$ forms a base for the topology on $X$, $a$ would have to 
be of the form $\{x\}$ for some isolated point $x\in X$.   But then $\LC{(\{x\})}=\LC_{1}\cong\R$ is a direct factor of 
$\LC{(X)}$ by the preceding paragraph, contrary to assumption (2). The proof is complete. 
\end{proof}

\providecommand{\bysame}{\leavevmode\hbox to3em{\hrulefill}\thinspace}

\end{document}